\theoremstyle{plain}
\newtheorem{theorem}{Theorem}[section]
\newtheorem{lemma}{Lemma}[section]
\newtheorem{definition}{Definition}[section]
\newtheorem{remark}{Remark}[section]
\renewcommand{\eqref}[1]{\textnormal{(\ref{#1})}}
\numberwithin{equation}{section}
\begin{document}

\title[Surface-localized transmission eigenmodes]{On new surface-localized transmission eigenmodes}

\author{Youjun Deng}
\address{School of Mathematics and Statistics, HNP-LAMA, Central South University, Changsha, Hunan, China.}
\email{youjundeng@csu.edu.cn, dengyijun\_001@163.com}

\author{Yan Jiang}
\address{Department of Mathematics, Jilin University, Changchun, Jilin, China.}
\email{jiangyan20@mails.jlu.edu.cn}

\author{Hongyu Liu}
\address{Department of Mathematics, City University of Hong Kong, Hong Kong SAR, China.}
\email{hongyu.liuip@gmail.com, hongyliu@cityu.edu.hk}

\author{Kai Zhang}
\address{Department of Mathematics, Jilin University, Changchun, Jilin, China.}
\email{zhangkaimath@jlu.edu.cn}


\begin{abstract}

Consider the transmission eigenvalue problem
\[
(\Delta+k^2\mathbf{n}^2) w=0,\ \ (\Delta+k^2)v=0\ \ \mbox{in}\ \ \Omega;\quad w=v,\ \ \partial_\nu w=\partial_\nu v=0\ \ \mbox{on} \ \partial\Omega.
\]
It is shown in \cite{CDHLW} that there exists a sequence of eigenfunctions $(w_m, v_m)_{m\in\mathbb{N}}$ associated with $k_m\rightarrow \infty$ such that either $\{w_m\}_{m\in\mathbb{N}}$ or $\{v_m\}_{m\in\mathbb{N}}$ are surface-localized, depending on $\mathbf{n}>1$
or $0<\mathbf{n}<1$. In this paper, we discover a new type of surface-localized transmission eigenmodes by constructing a sequence of transmission eigenfunctions $(w_m, v_m)_{m\in\mathbb{N}}$ associated with $k_m\rightarrow \infty$ such that both $\{w_m\}_{m\in\mathbb{N}}$ and $\{v_m\}_{m\in\mathbb{N}}$ are surface-localized, no matter $\mathbf{n}>1$ or $0<\mathbf{n}<1$. Though our study is confined within the radial geometry, the construction is subtle and technical.

\medskip

\noindent{\bf Keywords:}~~ Transmission eigenfunctions, spectral geometry, surface localization, wave concentration

\noindent{\bf 2010 Mathematics Subject Classification:}~~35P25, 78A46 (primary); 35Q60, 78A05 (secondary).

\end{abstract}

\maketitle

\section{Introduction}

\subsection{Mathematical setup and discussion on the major finding}

Let $\Omega$ be a bounded domain in $\mathbb{R}^N$, $N=2, 3$, with a connected complement $\mathbb{R}^N\backslash\overline{\Omega}$ and $\mathbf{n}\in L^\infty(\Omega)$ be a positive function. Consider the following transmission eigenvalue problem for $w \in H^1(\Omega)$ and $v\in H^1(\Omega)$:
\begin{equation}\label{eq:trans1}
\left\{
\begin{array}{ll}
\Delta w+k^2\mathbf{n}^2 w=0  &\text{in} \ \Omega,\medskip \\
\Delta v+k^2 v =0 &\text{in} \ \Omega, \medskip \\
\displaystyle{w=v,\ \ \frac{\partial w}{\partial\nu}=\frac{\partial v}{\partial\nu} } &\text{on} \ \partial \Omega, \\
\end{array}
\right.
\end{equation}
where $\nu$ is the exterior unit normal vector to $\partial\Omega$. Clearly, $w\equiv v\equiv 0$ are a pair of trivial solutions to \eqref{eq:trans1}. If there exists a non-trivial pair of solutions $(w, v)$ to \eqref{eq:trans1}, $k\in\mathbb{R}_+$ is called a transmission eigenvalue, and $w, v$ are the associated transmission eigenfunctions. The transmission eigenvalue problem connects to the inverse acoustic scattering theory in many aspects in delicate and mysterious manners, from unique identifiability, reconstruction algorithm to invisibility cloaking. Its study has a long and colourful history in the literature, and we refer to \cite{CKreview,CHreview,Liureview} for historical accounts and surveys on the state-of-the-arts developments.

In this paper, we are mainly concerned with the geometry of the transmission eigenfunctions, which was initiated in \cite{BL2017b} by showing that the transmission eigenfunctions generically vanish around a corner. The discovery was inspired by the corresponding study in the context of characterizing the wave scattering from corner singularities \cite{BPS}. The major difference is that the weaker regularities of the transmission eigenfunctions than those in the scattering problems require more technical and subtle treatments. The study has received considerable attentions recently in the literature; see  \cite{Bsource,EBL,BL2016,BLLW,BL2017,BL2018,BXL,CX,CDL,DCL,DDL,CDL,SPV} and the references cited therein. The results mentioned above are all of a local feature which are localized around certain peculiar geometrical points on $\partial\Omega$. In \cite{CDHLW}, a global geometric rigidity property was discovered for the transmission eigenfunctions. In fact, it is shown that there exists a sequence of eigenfunctions $(w_m, v_m)_{m\in\mathbb{N}}$ associated with $k_m\rightarrow \infty$ such that either $\{w_m\}_{m\in\mathbb{N}}$ or $\{v_m\}_{m\in\mathbb{N}}$ are surface-localized, depending on $\mathbf{n}>1$
or $0<\mathbf{n}<1$. In fact, if $\mathbf{n}>1$, $\{v_m\}_{m\in\mathbb{N}}$ are surface-localized, but $\{w_m\}_{m\in\mathbb{N}}$ are not surface-localized; whereas if $0<\mathbf{n}<1$, $\{w_m\}_{m\in\mathbb{N}}$ are surface-localized, but $\{v_m\}_{m\in\mathbb{N}}$ are not surface-localized. Here, by surface-localization, we mean the $L^2$-energy of the transmission eigenmode is concentrated in a sufficiently small neighbourhood of $\partial\Omega$. Moreover, two interesting applications of practical importance were generated by using the surface-localization of the transmission eigenmodes in \cite{CDHLW}, including a super-resolution wave scheme and a possible pseudo plasmon sensing scheme.

In the current article, we show that there exists a new type of surface-localised transmission eigenmodes which are different from those found in \cite{CDHLW}. In fact, we shall construct a sequence of transmission eigenfunctions $(w_m, v_m)_{m\in\mathbb{N}}$ associated with $k_m\rightarrow \infty$ such that both $\{w_m\}_{m\in\mathbb{N}}$ and $\{v_m\}_{m\in\mathbb{N}}$ are surface-localized, no matter $\mathbf{n}>1$ or $0<\mathbf{n}<1$. We shall mainly consider our study for the case that $\Omega$ is radially symmetric and $\mathbf{n}$ is a constant. Though the result is presented for a special setup, it turns out that the construction is subtle and technical. We believe the result holds for more general case, which shall be the subject of our future study.

\subsection{Connection to inverse scattering theory}

In order to provide a physical background of our spectral study, we briefly discuss the time-harmonic acoustic scattering due to an incident field $u^i$ and an inhomogeneous refractive medium $(\Omega, \mathbf{n})$. Here, $u^i$ is an entire solution to homogeneous Helmholtz equation,
\begin{equation}\label{eq:homnn1}
\Delta u^i + k^2 u^i =0\quad \text{in} \  \mathbb{R}^N.
\end{equation}
In the physical context, $k\in\mathbb{R}_+$ is the wavenumber and $\mathbf{n}$ is the refractive index. For notational convenience, we extend $\mathbf{n}$ to be $1$ outside $\Omega$. The forward scattering problem is described by the following Helmholtz system
\begin{equation}\label{eq:forwardpro}
\left\{
\begin{array}{ll}
\Delta u+k^2 \mathbf{n}^2(x)u =0  & \text{in} \  \mathbb{R}^N, \medskip \\
u = u^i+u^s  & \text{in} \  \mathbb{R}^N, \medskip \\
\displaystyle \lim\limits_{r\to\infty} r^{\frac{N-1}{2}}\left(\partial_r-\mathrm{i}k\right) u^s=0, & r=|x|,
\end{array}
\right.
\end{equation}
where $u$ and $u^s$ are respectively referred to as the total and scattered fields. The last limit in \eqref{eq:forwardpro} is known as the Sommerfeld radiation condition which holds uniformly in the angular variable $\hat x:=x/|x|$ and characterizes the outgoing nature of the scattered wave field $u^s$.  The well-posedness of the scattering system \eqref{eq:forwardpro} is known \cite{LSSZ}, and there exists a unique solution $u\in H_{loc}^2(\mathbb{R}^d)$ which admits the following asymptotic expansion:
\[
u^s(x; u^i, (\Omega,\mathbf{n}))=\frac{e^{\mathrm{i}k r}}{r^{(N-1)/2}} u_\infty(\hat x)+\mathcal{O}(r^{-(N+1)/2})\quad \mbox{as}\ r\rightarrow\infty.
\]
$u_\infty$ is known as the far-field pattern. Introduce an abstract operator which sends the inhomogeneity $(\Omega,\mathbf{n})$ to its far-field pattern under the probing of the incident field as follows:
\begin{equation}\label{eq:fo1}
\mathcal{F}((\Omega,\mathbf{n}), u^i)=u_\infty.
\end{equation}
An inverse problem of practical importance is to recover the refractive inhomogeneity $(\Omega,\mathbf{n})$ by knowledge of the far-field measurement. From the inverse problem point of view, it seems more practical for one to characterize the range of $\mathcal{F}$, namely $\mathrm{Range}(\mathcal{F})$ associated with all possible $u^i$, which contains all the ``visible" information. However, a different perspective was proposed in \cite{CDHLW} and in fact one can achieve super-resolution reconstruction for the inverse problem \eqref{eq:fo1} if instead using $\mathrm{Ker}(\mathcal{F})$. Indeed, it is shown in \cite{CDHLW} that $\mathcal{K}_\epsilon(\mathcal{F})$ can be obtained by knowledge of $\mathrm{Range}(\mathcal{F})$, where $\epsilon\in\mathbb{R}_+$ is sufficiently small. Here, $u^i\in \mathcal{K}_\epsilon(\mathcal{F})$ means that $\|\mathcal{F}((\Omega,\mathbf{n}), u^i)\|\leq \epsilon$. It turns out that the $\mathrm{Ker}_\epsilon(\mathcal{F})$ actually consists of the Herglotz extensions of the transmission eigenfunctions in \eqref{eq:trans1}. Hence, the surface-localized transmission eigenmodes carry the geometric information of the underlying refractive inhomogeneity, which forms the basis of the super-resolution imaging scheme in \cite{CDHLW}. Therefore, our study not only unveils a new spectral phenomenon of high theoretical value, but also is of significant practical interest.


\section{Main results}

Let us consider the transmission eigenvalue problem \eqref{eq:trans1} with $\Omega$ being a ball in $\mathbb{R}^N$, $N=2, 3$, and $\mathbf{n}$ being a positive constant. By scaling and translation if necessary, we can assume that $\Omega$ is the unit ball, namely $\Omega:=\{x\in\mathbb{R}^N; |x|<1\}$. In what follows, we set
\begin{equation}\label{eq:tau1}
\Omega_\tau:=\{x\in\mathbb{R}^N; |x|<\tau\},\quad \tau\in (0, 1).
\end{equation}

\begin{definition}\label{def:1}
Consider a function $\psi\in L^2(\Omega)$. It is said to be surface-localized if there exists $\tau_0\in (0, 1)$, sufficiently close to $1$, such that
\begin{equation}\label{eq:localized1}
\frac{\|\psi\|_{L^2(\Omega_{\tau_0})}}{\|\psi\|_{L^2(\Omega)}}\ll 1.
\end{equation}
\end{definition}
It is easy to see that if $\psi$ is surface-localized, then its $L^2$-energy concentrate in a small neighbourhood of $\partial\Omega$, namely $\Omega\backslash\Omega_{\tau_0}$. The qualitative asymptotic smallness in \eqref{eq:localized1} shall become more quantitatively definite in what follows. In fact, we can prove
\begin{theorem}\label{thm:main1}
Consider the transmission eigenvalue problem \eqref{eq:trans1} and assume that $\Omega$ is the unit ball and $\mathbf{n}\neq 1$ is a positive constant. Then for any given $\tau\in (0, 1)$, there exists a sequence of transmission eigenfunctions $\{w_m, v_m\}_{m\in\mathbb{N}}$ associated to eigenvalues $k_m$ such that
\begin{equation}\label{eq:result1}
k_m\rightarrow\infty\ \mbox{as}\ m\rightarrow\infty \ \ \mbox{and}\ \ \lim_{m\rightarrow\infty} \frac{\|\psi_m\|_{L^2(\Omega_\tau)}}{\|\psi_m\|_{L^2(\Omega)}}=0,\ \ \psi_m=w_m, v_m.
\end{equation}
\end{theorem}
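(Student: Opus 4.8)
The plan is to use the radial symmetry to separate variables, reduce the problem to a one-dimensional secular equation, and then manufacture eigenvalues lying in a carefully chosen window in which both radial profiles are exponentially small away from $\partial\Omega$. First I would write each eigenfunction in separated form. Fixing an angular order $\ell\in\N$, the only solutions of $\Delta w+\mathbf n^2k^2w=0$ and $\Delta v+k^2v=0$ that are regular at the origin are $w=c_w\,B_\ell(\mathbf n k r)\,Y_\ell(\hat x)$ and $v=c_v\,B_\ell(k r)\,Y_\ell(\hat x)$, where $B_\ell=J_\ell$ paired with $\rme^{\rmi\ell\theta}$ when $N=2$, and $B_\ell=j_\ell$ paired with a spherical harmonic of degree $\ell$ when $N=3$. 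Matching $w=v$ and $\partial_\nu w=\partial_\nu v$ on $\partial\Omega=\{r=1\}$ forces the two profiles to share the same $\ell$ and, after eliminating $c_w,c_v$, yields the secular equation
\[
\mathbf n\,\frac{B_\ell'(\mathbf n k)}{B_\ell(\mathbf n k)}=\frac{B_\ell'(k)}{B_\ell(k)} .
\]
The free parameters are the integer $\ell$ and the eigenvalue $k$, and the whole construction amounts to choosing them jointly so that $k\to\infty$.

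The key observation is that $B_\ell(\kappa r)$ has a Debye turning point at $r=\ell/\kappa$: below it the profile is exponentially small (of size $\rme^{-\ell\,\eta(\kappa r/\ell)}$ with $\eta>0$ decreasing) and grows monotonically towards the turning point. Hence each eigenfunction is surface-localized in the sense of \eqref{eq:result1} as soon as its turning point lies above the prescribed cut-off $\tau$, because then all of $\Omega_\tau$ sits in the evanescent region and a Laplace-type estimate bounds the energy ratio by $\rme^{-c\ell}$. The turning points of $v$ and $w$ are $r_v=\ell/k$ and $r_w=\ell/(\mathbf n k)$, so they differ by the fixed factor $\mathbf n$. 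I would therefore place the smaller of the two, $\rho:=\min(r_v,r_w)$, in the window $\rho\in(\max(\tau,\min(\mathbf n,\mathbf n^{-1})),1)$, which is nonempty for every $\tau\in(0,1)$; the larger turning point then automatically exceeds $1$, so that eigenfunction is evanescent on all of $\Omega$. In this regime both $w$ and $v$ are simultaneously surface-localized, irrespective of whether $\mathbf n>1$ or $0<\mathbf n<1$ (for $\mathbf n>1$ the oscillatory one is $w$, for $0<\mathbf n<1$ it is $v$).

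It remains to realise eigenvalues inside this window with $k\to\infty$. The eigenfunction with the larger turning point is fully evanescent, so its logarithmic derivative in the secular equation is a bounded, slowly varying positive quantity $\approx\sqrt{\ell^2-\kappa^2}/\kappa$ (with $\kappa\in\{k,\mathbf n k\}$ the relevant argument). The eigenfunction with the smaller turning point is just beyond its turning point and therefore oscillatory, so its logarithmic derivative runs monotonically from $+\infty$ to $-\infty$ between consecutive zeros of $B_\ell$. Across each such period the two sides of the secular equation must cross, so for every large $\ell$ the intermediate value theorem furnishes an eigenvalue $k=k_\ell$ whose smaller turning point $\rho$ can be kept in the prescribed window; letting $\ell\to\infty$ along integers then gives $k_\ell\asymp\ell\to\infty$ and the desired sequence $(w_m,v_m)$.

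The main obstacle, and the source of the technicalities flagged in the statement, is the quantitative asymptotic bookkeeping. One must pass from the qualitative turning-point picture to genuine estimates on the $L^2$-ratios using the uniform asymptotics of Bessel functions — the Debye expansion away from the turning points and the Airy-type expansion near them — while keeping the decay rate uniform by holding $\rho$ bounded away from $\tau$. The delicate point is the oscillatory eigenfunction: although its bulk energy lives in the shell $(\rho,1)$, one must show that the part inside $\Omega_\tau$ (entirely below its turning point) is exponentially small while the total is only polynomially small, and simultaneously that this very oscillation is what guarantees solvability of the secular equation. Reconciling \emph{oscillatory enough to produce eigenvalues} with \emph{evanescent enough below $\tau$ to be localized} is the crux of the argument, and the weights $r^{N-1}$ together with the $N=2$ versus $N=3$ distinction have to be tracked throughout.
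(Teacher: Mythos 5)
Your plan is correct and is essentially the paper's own argument: the paper likewise separates variables into single-mode eigenfunctions $J_m(k\mathbf{n}r)e^{\mathrm{i}m\theta}$, $J_m(kr)e^{\mathrm{i}m\theta}$ (spherical Bessel functions in 3D, and the $0<\mathbf{n}<1$ case reduced to $\mathbf{n}>1$ by the same scaling symmetry you invoke), produces eigenvalues $k_{l_m}\in\left(j_{m,s_0}/\mathbf{n},\, j_{m,s_0+1}/\mathbf{n}\right)$ via a sign change of the cross-multiplied secular function $f_m$ between consecutive scaled zeros of $J_m$ --- your intermediate-value crossing of logarithmic derivatives in disguise --- and this placement realizes exactly your turning-point window, since $k_{l_m}<m$ makes $v_m$ evanescent on all of $\Omega$ while $w_m$ oscillates only in a thin shell near $r=1$ (its turning point $m/(\mathbf{n}k_{l_m})\to 1^-$). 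The quantitative bookkeeping you flag as the crux is carried out in the paper by the Carlini (Debye-type) formula for the ratio $J_m(k_{l_m}\tau)/J_m(k_{l_m})$, a convexity/tangent-line lower bound for $\int_0^1 rJ_m^2(k_{l_m}r)\,\mathrm{d}r$, and Krasikov's bound on $J_m'/J_m$ at the eigenvalue, yielding an explicit estimate of the form $Cm^4\tau^2(1-\delta(\tau,\mathbf{n}))^{2m}\to 0$.
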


\begin{remark}
By \eqref{eq:result1}, it is clear that both $\{w_m\}_{m\in\mathbb{N}}$ and $\{v_m\}_{m\in\mathbb{N}}$ are surface-localized according to Definition~\ref{def:1}. In our subsequent proof of Theorem~\ref{eq:result1}, it can be seen that the transmission eigenmode corresponding to higher mode number is more localized around the surface. In other words, in \eqref{eq:result1}, $\tau$ can be very close to $1$ provided $k_m$ is sufficiently large. It is interesting to point out that $1-\tau$ is actually the localizing radius of the eigenmode, which defines the super-resolution power of the wave imaging scheme proposed in \cite{CDHLW}.
\end{remark}

\begin{remark}
It is sufficient for us to prove Theorem~\ref{thm:main1} only for the case $\mathbf{n}>1$. In fact, let us suppose that Theorem~\ref{thm:main1} holds true for $\mathbf{n}>1$, and instead consider the other case with $0<\mathbf{n}<1$. Set $\tilde k=k\mathbf{n}$, $\tilde{\mathbf{n}}=\mathbf{n}^{-1}$, $\tilde w=v$ and $\tilde v=w$. Then \eqref{eq:trans1} can be recast as
\begin{equation}\label{eq:trans11}
\left\{
\begin{array}{ll}
\Delta \tilde w+\tilde{k}^2\tilde{\mathbf{n}}^2 \tilde{w}=0  &\text{in} \ \Omega,\medskip \\
\Delta \tilde{v}+\tilde{k}^2 \tilde{v} =0 &\text{in} \ \Omega, \medskip \\
\displaystyle{\tilde w=\tilde v,\ \ \frac{\partial \tilde{w}}{\partial\nu}=\frac{\partial \tilde{v}}{\partial\nu} } &\text{on} \ \partial \Omega.  \\
\end{array}
\right.
\end{equation}
Since $\tilde{\mathbf{n}}>1$, we readily have that there exist $(v_m, w_m)=(\tilde{w}_m, \tilde{v}_m)$, $m\in\mathbb{N}$, associated to $k_m=\tilde{k}_m/\mathbf{n}\rightarrow\infty$, which are surface-localized. Hence, throughout the rest of the paper, we assume that $\mathbf{n}>1$.
\end{remark}

\subsection{Two-dimensional result}

In this subsection, we prove Theorem~\ref{thm:main1} in the two-dimensional case. Let $x=(r\cos\theta, r\sin\theta)\in\mathbb{R}^2$ denote the polar coordinate.
By Fourier expansion, the solutions to \eqref{eq:trans1} have the following series expansions:
\begin{equation}\label{eq:series1}
w(x)=\sum_{m=0}^\infty \alpha_m J_m(k\mathbf{n}|x|) e^{\mathrm{i}m\theta}, \quad v(x)=\sum_{m=0}^\infty \beta_m J_m(k|x|) e^{\mathrm{i}m\theta},
\end{equation}
where $J_m$ is the $m$-th order Bessel function and $\alpha_m, \beta_m\in\mathbb{C}$ are the Fourier coefficients. Set
\begin{equation}\label{eq:series2}
w_m(x)=\alpha_m J_m(k\mathbf{n}|x|) e^{\mathrm{i}m\theta}, \quad v_m(x)=\beta_m J_m(k|x|) e^{\mathrm{i}m\theta}.
\end{equation}
In what follows, we shall construct the surface-localized transmission eigenmodes of the form \eqref{eq:series2} to fulfil the requirement in Theorem~\ref{thm:main1}. In order to make $(w_m, v_m)$ in \eqref{eq:series2} transmission eigenfunctions of \eqref{eq:trans1}, one has by using the two transmission conditions on $\partial\Omega$, together with straightforward calculations that
\begin{equation}\label{eq:d1}
\beta_m=\frac{J_m(k\mathbf{n})}{J_m(k)}\alpha_m,
\end{equation}
and $k$ must be a root of the following function
\begin{equation}\label{eq:d2}
f_m(k)=J_{m-1}(k) J_m(k\mathbf{n})-\mathbf{n} J_m(k) J_{m-1}(k\mathbf{n}),\ \ m\geq 1.
\end{equation}
Next, we prove the existence of transmission eigenvalues by finding roots of $f_m$. In the sequel, we let $j_{m,s}$ denote the $s$-th positive root of $J_m(t)$ (arranged according to the magnitude), and $j_{m,s}'$ denote the $s$-th positive root of $J_m'(t)$. Here, it is pointed out that both $J_m(t)$ and $J_m'(t)$ possess infinitely many positive roots, accumulating only at $\infty$ (cf. \cite{LiuZou}).

\begin{lemma}\label{lem:1}
Let $\mathbf{n}>1$ and $s_0\in\mathbb{N}$ be fixed. Then there exists $m_0(\mathbf{n}, s_0)\in\mathbb{N}$ such that when $m>m_0(\mathbf{n}, s_0)$, one has
\begin{equation}\label{eq:p1}
\frac{j_{m,s_0}}{\mathbf{n}}\leq m.
\end{equation}
\end{lemma}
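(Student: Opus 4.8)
The plan is to reduce the claim to the large-order asymptotic behaviour of the positive zeros of $J_m$. Since $\mathbf{n}>1$ and $s_0$ are both held fixed while $m$ is the large parameter, the inequality \eqref{eq:p1} is equivalent to $j_{m,s_0}\le\mathbf{n}\,m$, so it suffices to establish
\[
\limsup_{m\to\infty}\frac{j_{m,s_0}}{m}\le 1<\mathbf{n}.
\]
Once the ratio $j_{m,s_0}/m$ is shown to converge to $1$, the strict gap $1<\mathbf{n}$ forces $j_{m,s_0}/m\le\mathbf{n}$, equivalently $j_{m,s_0}/\mathbf{n}\le m$, for every $m$ beyond some threshold, which we then take to be $m_0(\mathbf{n},s_0)$.

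First I would record the relevant asymptotics for the $s$-th zero of $J_m$ as the \emph{order} $m\to\infty$ with $s=s_0$ fixed. From the uniform Airy-type asymptotic expansions of Bessel functions of large order (cf. \cite{LiuZou} and the references therein), one has
\[
j_{m,s_0}=m+c_{s_0}\,m^{1/3}+O\!\left(m^{-1/3}\right)\qquad(m\to\infty),
\]
where $c_{s_0}>0$ depends only on $s_0$ (explicitly through the $s_0$-th negative zero of the Airy function). In particular $j_{m,s_0}/m=1+c_{s_0}m^{-2/3}+o(m^{-2/3})\to 1$, which is precisely the estimate required above, and the lemma follows.

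Because invoking the precise Airy-type expansion amounts to a black box, I would also sketch a more self-contained justification by counting zeros. The turning point of $J_m(t)$ sits at $t=m$, and for fixed $x>1$ the Debye expansion gives the oscillatory behaviour
\[
J_m(mx)\sim\sqrt{\frac{2}{\pi m\sqrt{x^2-1}}}\,\cos\!\Big(m\big(\sqrt{x^2-1}-\arccos(1/x)\big)-\tfrac{\pi}{4}\Big)
\]
as $m\to\infty$. Consequently the number of zeros of $J_m$ in the interval $(m,\mathbf{n}m]$ grows like $\tfrac{m}{\pi}\big(\sqrt{\mathbf{n}^2-1}-\arccos(1/\mathbf{n})\big)$, and since the factor $\sqrt{\mathbf{n}^2-1}-\arccos(1/\mathbf{n})$ is strictly positive for $\mathbf{n}>1$, this count tends to $+\infty$. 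Hence for all sufficiently large $m$ the interval $(m,\mathbf{n}m]$ already contains more than $s_0$ zeros of $J_m$, so in particular $j_{m,s_0}\le\mathbf{n}m$, which is exactly \eqref{eq:p1}.

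I expect the main obstacle to be the rigorous control of the asymptotics near the turning point $t=m$: the Debye expansion degenerates there, so to make the zero-counting argument fully rigorous one must either pass through the uniform Airy approximation valid across $t=m$, or invoke Sturm-type comparison estimates to bound the number of oscillations. Once the leading-order relation $j_{m,s_0}/m\to 1$ is secured by either route, the rest of the argument is the elementary comparison against the fixed constant $\mathbf{n}>1$.
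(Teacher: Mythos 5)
Your main argument is essentially the paper's own proof: the paper invokes the Wong--Qu bounds (formula (1.2) in \cite{WQ99}), namely $m-\frac{a_{s}}{2^{1/3}}m^{1/3}<j_{m,s}<m-\frac{a_{s}}{2^{1/3}}m^{1/3}+\frac{3}{20}a_{s}^{2}\frac{2^{1/3}}{m^{1/3}}$ with $a_s$ the $s$-th negative zero of the Airy function, which is precisely the quantitative form of the Airy-type expansion $j_{m,s_0}=m+c_{s_0}m^{1/3}+O(m^{-1/3})$ that you cite, and it then concludes $j_{m,s_0}\le \mathbf{n}m$ for all large $m$ exactly as you do via $j_{m,s_0}/m\to 1<\mathbf{n}$. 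Your supplementary Debye zero-counting sketch is a different route, but as you yourself note it is not rigorous near the turning point, and your primary argument coincides with the paper's.
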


\begin{proof}
According to the formula (1.2) in \cite{WQ99}, we know
\begin{equation*}
m-\frac{a_{s}}{2^{1 / 3}} m^{1 / 3}<j_{m, s}<m-\frac{a_{s}}{2^{1 /
3}} m^{1 / 3}+\frac{3}{20} a_{s}^{2} \frac{2^{1 / 3}}{m^{1 / 3}},
\end{equation*}
where $a_s$ is the $s$-th negative zero of the Airy function and has the representation
\begin{equation}\label{eq:dd1}
a_{s}=-\left[\frac{3 \pi}{8}(4 s-1)\right]^{2 /
3}(1+\sigma_{s}).
\end{equation}
Here, $\sigma_s$ in \eqref{eq:dd1} can be estimated by
\begin{equation}
0 \leq \sigma_{s} \leq 0.130\left[\frac{3 \pi}{8}(4
s-1.051)\right]^{-2}.
\end{equation}
By combining the above estimates, one can show by straightforward calculations that
\begin{equation*}
\frac{j_{m,s_0}}{\mathbf{n}}\leq m.
\end{equation*}

The proof is complete.
\end{proof}

\begin{lemma}\label{lem:2}
Let $\mathbf{n}>1$ and $s_0\in\mathbb{N}$ be fixed. Then there exists $m_0(\mathbf{n}, s_0)\in\mathbb{N}$ such that when $m>m_0(\mathbf{n}, s_0)$, the function $f_m(k)$ in \eqref{eq:d2} possesses at least one zero point in $(\frac{j_{m,s_0}}{\mathbf{n}}$, $\frac{j_{m, s_0+1}}{\mathbf{n}})$.

\end{lemma}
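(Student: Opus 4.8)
The plan is to locate a root of $f_m$ by the intermediate value theorem, evaluating $f_m$ at the two endpoints of the interval $(j_{m,s_0}/\mathbf{n},\, j_{m,s_0+1}/\mathbf{n})$ and showing that it changes sign there. The key observation is that at $k=j_{m,s}/\mathbf{n}$ one has $k\mathbf{n}=j_{m,s}$, so $J_m(k\mathbf{n})=0$ and the first term in \eqref{eq:d2} drops out, leaving
\begin{equation*}
f_m\Bigl(\frac{j_{m,s}}{\mathbf{n}}\Bigr)=-\mathbf{n}\,J_m\Bigl(\frac{j_{m,s}}{\mathbf{n}}\Bigr)J_{m-1}(j_{m,s}).
\end{equation*}
Thus the sign of $f_m$ at each endpoint is governed by the product of the two factors $J_m(j_{m,s}/\mathbf{n})$ and $J_{m-1}(j_{m,s})$, which I would analyze separately.

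For the factor $J_{m-1}(j_{m,s})$, I would invoke the recurrence $J_{m-1}(t)=J_m'(t)+(m/t)J_m(t)$, which at a zero of $J_m$ collapses to the identity $J_{m-1}(j_{m,s})=J_m'(j_{m,s})$. Since $J_m$ is positive on $(0,j_{m,1})$ and crosses zero transversally at each of its simple zeros, the derivative $J_m'$ alternates in sign along the sequence $j_{m,1}<j_{m,2}<\cdots$, giving $\sgn\bigl(J_{m-1}(j_{m,s})\bigr)=(-1)^s$. In particular $J_{m-1}(j_{m,s_0})$ and $J_{m-1}(j_{m,s_0+1})$ carry opposite signs, which is precisely the sign change I want to exploit.

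For the factor $J_m(j_{m,s}/\mathbf{n})$, I would use Lemma~\ref{lem:1}, applied to both $s_0$ and $s_0+1$ (taking the larger of the two threshold indices $m_0$), so that $j_{m,s_0}/\mathbf{n}<j_{m,s_0+1}/\mathbf{n}\leq m$ once $m$ is large. Combined with the classical bound $j_{m,1}>m$ for the first positive zero of $J_m$, this places both evaluation points strictly inside $(0,j_{m,1})$, the interval on which $J_m>0$. Hence both factors $J_m(j_{m,s_0}/\mathbf{n})$ and $J_m(j_{m,s_0+1}/\mathbf{n})$ are positive, and the sign of $f_m$ at the endpoints is dictated entirely by the alternating factor $J_{m-1}$. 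Therefore $f_m(j_{m,s_0}/\mathbf{n})$ and $f_m(j_{m,s_0+1}/\mathbf{n})$ have opposite signs, and the continuity of $f_m$ yields the desired zero in between.

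I expect the main obstacle to be the bookkeeping that confines both endpoints to $(0,j_{m,1})$: this is where Lemma~\ref{lem:1} and the inequality $j_{m,1}>m$ must be combined carefully, and it is the only place where the largeness of $m$, and hence the threshold $m_0(\mathbf{n},s_0)$, is genuinely needed. The sign analysis via the recurrence relation is robust, but one should verify that the zeros $j_{m,s}$ are simple so that $J_m'(j_{m,s})\neq 0$ and the strict sign alternation holds; this guarantees that the endpoint values of $f_m$ are nonzero, so that the intermediate value argument indeed produces an interior root.
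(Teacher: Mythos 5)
Your proposal is correct and follows essentially the same route as the paper: evaluate $f_m$ at the endpoints $j_{m,s_0}/\mathbf{n}$ and $j_{m,s_0+1}/\mathbf{n}$, where the first term of $f_m$ drops out, use Lemma~\ref{lem:1} (applied with $s_0+1$) together with the bound $j_{m,1}>j'_{m,1}\geq m$ to keep both endpoints inside the region where $J_m>0$, and extract the sign change from $J_{m-1}$ at consecutive zeros of $J_m$, concluding by the intermediate value theorem. The only cosmetic difference is that the paper cites the interlacing of the zeros of $J_{m-1}$ and $J_m$ from \cite{LZ18} to obtain $J_{m-1}(j_{m,s_0})\,J_{m-1}(j_{m,s_0+1})<0$, whereas you derive the same sign alternation self-containedly from the recurrence identity $J_{m-1}(j_{m,s})=J_m'(j_{m,s})$ and the simplicity of the zeros, which is an equivalent substitute.
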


\begin{proof}
According to the formula (9.5.2) in \cite{Abr}, we know
\begin{equation*}
m \leq j_{m, 1}'<j_{m, 1}<j_{m, 2}'<j_{m, 2}<j_{m,
3}'<\cdots.
\end{equation*}
By Lemma \ref{lem:1}, one has $\frac{j_{m,
s_0+1}}{\mathbf{n}}\leq m\leq j_{m,1}'$ for $m>m_0(\mathbf{n},s_0+1)$. Next we consider $f_{m}(k)$ for $k\in(\frac{j_{m, s_0}}{\mathbf{n}}, \frac{j_{m,
s_0+1}}{\mathbf{n}})$. We have
\begin{equation*}
J_{m}(k) \geq 0, \quad k \in\left[0, j_{m,
1}'\right],
\end{equation*}
and this implies $J_{m}(k)>0$ for $k\in(\frac{j_{m, s_0}}{\mathbf{n}}, \frac{j_{m,
s_0+1}}{\mathbf{n}})$. Following Lemma~2.1 in \cite{LZ18}, we know that the positive zeros of $J_{m-1}$ are interlaced with those of $J_{m}$, and hence
\begin{equation*}
J_{m-1}(j_{m, s_0}) \cdot J_{m-1}(j_{m,
s_0+1})<0.
\end{equation*}
By using the above fact, one can show that
\begin{equation}\label{eq:ddd3}
\begin{aligned}
& f_{m}\left(\frac{j_{m, s_0}}{\mathbf{n}}\right) \cdot f_{m}\left(\frac{j_{m, s_0+1}}{\mathbf{n}}\right)\\
=&\, \mathbf{n}^2 J_{m}\left(\frac{j_{m, s_0}}{\mathbf{n}}\right) \cdot J_{m}\left(\frac{j_{m, s_0+1}}{\mathbf{n}}\right)
\cdot J_{m-1}(j_{m, s_0}) \cdot J_{m-1}(j_{m, s_0+1}) \\
\leq\, & \mathbf{n}^2(J_{m}(j'_{m, 1}))^{2} \cdot J_{m-1}(j_{m, s_0}) \cdot J_{m-1}(j_{m, s_0+1}) \\
<&\, 0.
\end{aligned}
\end{equation}
which readily implies by Rolle's theorem that there exists at least one zero point of $f(k)$ in $(\frac{j_{m,s_0}}{\mathbf{n}}$, $\frac{j_{m, s_0+1}}{\mathbf{n}})$.

The proof is complete.
\end{proof}
Clearly, Lemma~\ref{lem:2} proves the existence of transmission eigenvalues. In what follows, for a fixed $s_0$, we let the transmission eigenvalue be denoted by
\begin{equation}\label{eq:te1}
k_{l_{m}}:=k_{m, s_0} \in \left(\frac{j_{m, s_0}}{\mathbf{n}},
\frac{j_{m, s_0+1}}{\mathbf{n}}\right), \quad m=m_{0}+1, m_{0}+2,
m_{0}+3, \cdots,
\end{equation}
where $m_0=m_0(\mathbf{n}, s_0)$ is sufficiently large fulfilling the requirements in Lemmas~\ref{lem:1} and \ref{lem:2}.

\begin{lemma}\label{lem:3}
There exists constants $C$ amd $\gamma$ such that
\begin{equation}\label{eq:ee1}
    \frac{J'_{m}(k_{l_{m}})}{J_{m}(k_{l_{m}})}\leqslant
    Cm^{\gamma}.
\end{equation}
\end{lemma}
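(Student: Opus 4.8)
The plan is to control the logarithmic derivative $J_m'/J_m$ at the eigenvalue by combining the precise localization of $k_{l_m}$ with a standard three-term recurrence for Bessel functions, thereby avoiding any delicate uniform asymptotics. The structural observation driving everything is that, by the construction in \eqref{eq:te1} together with the estimates established in Lemmas~\ref{lem:1} and~\ref{lem:2}, the eigenvalue satisfies
\[
k_{l_m}\in\Big(\frac{j_{m,s_0}}{\mathbf{n}},\frac{j_{m,s_0+1}}{\mathbf{n}}\Big)\subset(0,m)\subset(0,j_{m,1}').
\]
On this interval $J_m$ is strictly positive --- indeed $J_m\geq0$ on $[0,j_{m,1}']$, as already used in the proof of Lemma~\ref{lem:2}, and it vanishes only at the origin --- so the ratio in \eqref{eq:ee1} is well defined and positive, which is what makes a clean upper bound possible.

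Next I would rewrite the numerator via the recurrence $J_m'(x)=\frac{m}{x}J_m(x)-J_{m+1}(x)$, giving $\frac{J_m'(x)}{J_m(x)}=\frac{m}{x}-\frac{J_{m+1}(x)}{J_m(x)}$. The key point is that on the interval above one also has $J_{m+1}(x)>0$: by Lemma~\ref{lem:1} the interval lies in $(0,m)$, whereas the first positive zero of $J_{m+1}$ satisfies $j_{m+1,1}>m+1>m$, so $J_{m+1}$ has not yet changed sign. Since both $J_m$ and $J_{m+1}$ are positive at $x=k_{l_m}$, the subtracted term is positive and we obtain the pointwise majorization $\frac{J_m'(k_{l_m})}{J_m(k_{l_m})}<\frac{m}{k_{l_m}}$.

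Finally I would supply a matching lower bound for $k_{l_m}$. By \eqref{eq:te1} and the interlacing $m\leq j_{m,1}'<j_{m,1}\leq j_{m,s_0}$, one has $k_{l_m}>\frac{j_{m,s_0}}{\mathbf{n}}>\frac{m}{\mathbf{n}}$, so that $\frac{m}{k_{l_m}}<\mathbf{n}$. Chaining this with the previous step yields $\frac{J_m'(k_{l_m})}{J_m(k_{l_m})}<\mathbf{n}$, so that \eqref{eq:ee1} holds with the constant choice $C=\mathbf{n}$ and $\gamma=0$. The only genuine subtlety --- the hard part --- is to guarantee that the denominator stays uniformly bounded away from zero and that $J_{m+1}$ keeps a fixed sign across the whole interval as $m$ grows; both follow from the Bessel-zero localization and interlacing recorded in Lemmas~\ref{lem:1}--\ref{lem:2} and the elementary inequality $j_{\nu,1}>\nu$, so no heavy machinery is required. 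I would also note that a cruder argument through uniform Debye-type asymptotics of $J_m$ near the turning point $x\approx m$ would likewise deliver a bound of the stated polynomial form $Cm^\gamma$, but the recurrence route is shorter and in fact yields the sharper exponent $\gamma=0$.
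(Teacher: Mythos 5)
Your proof is correct, but it takes a genuinely different and more elementary route than the paper's. The paper invokes Krasikov's uniform bound for $J_m'(x)/J_m(x)$ (Theorem~1 of \cite{Kra06}), valid for $0<x<\sqrt{(m+1)(m+3)}$, and then estimates the numerator and denominator of that bound using the Wong--Qu localization of $j_{m,s}$ from \cite{WQ99}, arriving at a polynomial bound $Cm^{\gamma}$ with a rather lossy exponent. You instead use the recurrence $J_m'(x)=\frac{m}{x}J_m(x)-J_{m+1}(x)$, so that $\frac{J_m'(k_{l_m})}{J_m(k_{l_m})}=\frac{m}{k_{l_m}}-\frac{J_{m+1}(k_{l_m})}{J_m(k_{l_m})}<\frac{m}{k_{l_m}}<\mathbf{n}$; the positivity of both Bessel factors is exactly as you say, since $k_{l_m}<j_{m,s_0+1}/\mathbf{n}\le m$ (Lemma~\ref{lem:1} applied with index $s_0+1$, just as in the proof of Lemma~\ref{lem:2}) while $j_{m,1}>j_{m,1}'\ge m$ and $j_{m+1,1}>j_{m+1,1}'\ge m+1$, and the lower bound $k_{l_m}>j_{m,s_0}/\mathbf{n}>m/\mathbf{n}$ follows from \eqref{eq:te1} together with $j_{m,s_0}\ge j_{m,1}>m$. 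Your route buys a sharper conclusion --- $\gamma=0$ with $C=\mathbf{n}$, which is essentially optimal since Debye-type asymptotics give $J_m'(x)/J_m(x)\approx\sqrt{m^2-x^2}/x\approx\sqrt{\mathbf{n}^2-1}$ at $x\approx m/\mathbf{n}$ --- and it avoids the external uniform estimate entirely, at the small price of needing the sign of $J_{m+1}$ on the interval, which is free from the same zero-interlacing facts already used in Lemma~\ref{lem:2}. Either bound is more than sufficient downstream: in Theorem~\ref{thm:surloc} the factor $1+2k_{l_m}J_m'(k_{l_m})/J_m(k_{l_m})$ need only grow polynomially in $m$ against the exponentially small factor $(1-\delta(\tau,\mathbf{n}))^{2m}$, and your version even improves the prefactor $36\mathbf{n}m^4\tau^2$ in \eqref{eq:gg1} to one of order $m$. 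The only bookkeeping point worth stating explicitly is that your chain of inequalities holds for $m$ beyond the threshold of Lemma~\ref{lem:1} with index $s_0+1$, which is harmless because $k_{l_m}$ is only defined for such $m$ in \eqref{eq:te1}.
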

\begin{proof}
By Theorem~1 in \cite{Kra06}, we have for $0<x<\sqrt{(m+1)(m+3)}$ that
\begin{equation}\label{eq:J'quoJ}
\begin{split}
& \frac{J_{m}'(x)}{J_{m}(x)}\\
\leq & \frac{4 x^{2}-12 m-6+\sqrt{((2m+1)(2m+3)-4 x^{2})^{3}+((2m+1)(2m+3))^{2}}}{2 x((2 m+1)(2 m+5)-4 x^{2})}.
\end{split}
\end{equation}
The items in the numerator of the RHS of \eqref{eq:J'quoJ} can be estimated by
\begin{equation}\label{eq:J'quoJ1}
\begin{aligned}
 &4x^2-12 m-6\leqslant 4m^2,\\
 &\sqrt{((2m+1)(2m+3)-4 x^{2})^{3}+((2m+1)(2m+3))^{2}}\leqslant 4m^3.
\end{aligned}
\end{equation}
According to the formula (1.2) in \cite{WQ99}, we further have
\begin{equation*}
m-\frac{a_{s}}{2^{1/3}} m^{1/3}<j_{m, s}<m-\frac{a_{s}}{2^{1/3}} m^{1/3}+\frac{3}{20} a_{s}^{2} \frac{2^{1/3}}{m^{1/3}}.
\end{equation*}
Hence, it holds that
\begin{equation*}
\frac{m}{\mathbf{n}}\left(1+2(\frac{s_{0}}{m})^{\frac{2}{3}}\right)
\leqslant\frac{j_{m,s_0}}{\mathbf{n}}<k_{l_{m}}<\frac{j_{m,s_0+1}}{\mathbf{n}}
\leqslant\frac{m}{\mathbf{n}}\left(1+3(\frac{s_{0}+1}{m})^{\frac{2}{3}}+2(\frac{s_{0}+1}{m})^{\frac{4}{3}}\right).
\end{equation*}
One thus has for sufficiently large $m$ that
\begin{equation*}
\frac{m}{\mathbf{n}}\leqslant k_{l_{m}}\leqslant \frac{(\mathbf{n}+1)m}{2\mathbf{n}}.
\end{equation*}
By substituting $x=k_{l_{m}}$ in \eqref{eq:J'quoJ}, when $\mathbf{n}>1$
\begin{equation}\label{eq:J'quoJ2}
2x((2m+1)(2m+5)-4x^2)>\frac{2m}{\mathbf{n}}(4m^2+12m+5-4m^2)>\frac{1}{\mathbf{n}}.
\end{equation}
Finally, by substituting (\ref{eq:J'quoJ1}) and (\ref{eq:J'quoJ2}) in (\ref{eq:J'quoJ}), together with straightforward calculations, we can arrive at \eqref{eq:ee1}.

The proof is complete.
\end{proof}

We are in a position to present the proof of Theorem~\ref{thm:main1}, which shall be split into two theorems as follows.

\begin{theorem}\label{thm:surloc}
Consider the same setup as that in Theorem~\ref{thm:main1} in $\mathbb{R}^2$ and assume that $\mathbf{n}>1$ and $\tau\in (0, 1)$ is fixed. Let $(w_m, v_m)$ be the pair of transmission eigenfunctions associated with $k_{l_m}$ in \eqref{eq:te1}. Then it holds that
\begin{equation}
\lim\limits_{m\rightarrow\infty}\frac{\|v_m\|_{L^2(\Omega_{\tau})}}{\|v_m\|_{L^2(\Omega)}}=0.
\end{equation}
\end{theorem}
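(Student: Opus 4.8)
The plan is to show that $v_m(x)=\beta_m J_m(k_{l_m}|x|)e^{\mathrm{i}m\theta}$ concentrates near $|x|=1$ by directly estimating the ratio of $L^2$-norms. Since the angular factor $e^{\mathrm{i}m\theta}$ has modulus one and integrates to $2\pi$ over $[0,2\pi)$, the constant $\beta_m$ cancels and the problem reduces entirely to the radial Bessel integrals:
\begin{equation*}
\frac{\|v_m\|_{L^2(\Omega_\tau)}^2}{\|v_m\|_{L^2(\Omega)}^2}=\frac{\int_0^\tau |J_m(k_{l_m}r)|^2\,r\,\mathrm{d}r}{\int_0^1 |J_m(k_{l_m}r)|^2\,r\,\mathrm{d}r}.
\end{equation*}
After the substitution $t=k_{l_m}r$, both integrals become $\int_0^{\,\cdot} J_m(t)^2\,t\,\mathrm{d}t$ with upper limits $\tau k_{l_m}$ and $k_{l_m}$ respectively; the prefactor $k_{l_m}^{-2}$ cancels. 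So everything rests on understanding the antiderivative $\int_0^X J_m(t)^2 t\,\mathrm{d}t$, for which I would invoke the classical Lommel integral formula $\int_0^X J_m(t)^2 t\,\mathrm{d}t=\tfrac{X^2}{2}\bigl(J_m'(X)^2+(1-m^2/X^2)J_m(X)^2\bigr)$.

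First I would record, from \eqref{eq:te1} together with the estimates derived in the proof of Lemma~\ref{lem:3}, the two-sided bound $\tfrac{m}{\mathbf{n}}\le k_{l_m}\le \tfrac{(\mathbf{n}+1)m}{2\mathbf{n}}$, which places $k_{l_m}$ strictly below $m$ since $\mathbf{n}>1$. The decisive structural point is that on the interval $[0,m]$ the Bessel function $J_m(t)$ is in its \emph{monotone, subexponentially small} regime (before the first turning point $t\approx m$): $J_m$ increases from $0$ and remains positive and tiny for $t$ below $m$. Concretely, $\tau k_{l_m}<k_{l_m}<m$, so both integration limits lie in this pre-oscillatory regime, and on $[0,m]$ the integrand $J_m(t)^2 t$ is increasing. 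The heuristic is that $J_m(\tau k_{l_m})$ is exponentially smaller than $J_m(k_{l_m})$ because $\tau k_{l_m}$ is a fixed fraction further back down the exponentially growing pre-turning-point profile. I would make this quantitative using the uniform asymptotics (Debye-type expansions) for $J_m(mz)$ with $0<z<1$, which give $J_m(mz)\sim (2\pi m)^{-1/2}(1-z^2)^{-1/4}\exp\!\bigl(-m\,\eta(z)\bigr)$ for an explicit increasing function $\eta$, so that the ratio of the two Bessel values decays like $\exp(-c(\tau,\mathbf{n})\,m)$.

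The main obstacle is controlling the numerator and denominator through the Lommel formula rather than the integrand pointwise, since the numerator's value at $X=\tau k_{l_m}$ involves $J_m'$ as well as $J_m$. I would handle this by bounding the numerator crudely, $\int_0^{\tau k_{l_m}}J_m(t)^2 t\,\mathrm{d}t\le \tfrac{(\tau k_{l_m})^2}{2}J_m(\tau k_{l_m})^2\bigl(1+o(1)\bigr)$ using that $J_m'(X)^2\le (\text{const})J_m(X)^2$ on this regime (here Lemma~\ref{lem:3} supplies exactly the polynomial control $J_m'(X)/J_m(X)\le Cm^\gamma$ needed so the $J_m'$ contribution does not dominate), and bounding the denominator from below by the single full-ball term $\tfrac{k_{l_m}^2}{2}\bigl(J_m'(k_{l_m})^2+(1-m^2/k_{l_m}^2)J_m(k_{l_m})^2\bigr)$, keeping the positive $J_m'(k_{l_m})^2$ piece. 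The quotient is then at most a polynomial-in-$m$ multiple of $J_m(\tau k_{l_m})^2/J_m(k_{l_m})^2$, which by the Debye asymptotics is $\le Cm^{2\gamma+1}\exp(-c(\tau,\mathbf{n})m)\to 0$. The delicate bookkeeping lies in verifying that the function $\eta(z)$ is strictly increasing so that the exponential gap between $z=\tau k_{l_m}/m$ and $z=k_{l_m}/m$ is genuinely negative and of order $m$; once that sign is pinned down, the exponential decay beats every polynomial factor and \eqref{eq:result1} for $\psi_m=v_m$ follows.
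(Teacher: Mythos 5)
Your overall mechanism is the same as the paper's: reduce to the radial integrals, produce a polynomial-in-$m$ prefactor multiplying $\bigl(J_m(\tau k_{l_m})/J_m(k_{l_m})\bigr)^2$, and kill it with the exponential gap coming from the fact that both arguments $\tau k_{l_m}<k_{l_m}$ lie strictly below the turning point $m$. Your Debye estimate is literally the paper's Carlini computation: the paper's monotone auxiliary function $\varphi(x)=x e^{\sqrt{1-x^2}}/(1+\sqrt{1-x^2})$ is exactly $e^{-\eta(x)}$, where $\eta(z)=\ln\bigl((1+\sqrt{1-z^2})/z\bigr)-\sqrt{1-z^2}$ satisfies $\eta'(z)=-\sqrt{1-z^2}/z<0$. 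So note that $\eta$ is strictly \emph{decreasing}, not increasing as you wrote; with the correct sign the gap $\eta(\tau k_{l_m}/m)-\eta(k_{l_m}/m)$ is positive and bounded below uniformly in $m$, because $k_{l_m}/m\in\bigl(\tfrac{1}{\mathbf{n}},\tfrac{\mathbf{n}+1}{2\mathbf{n}}\bigr)$. That slip is cosmetic; the substantive problem lies elsewhere.

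The genuine gap is your lower bound for the denominator. Since $k_{l_m}<m$, the second term in the Lommel identity $\int_0^X tJ_m(t)^2\,\mathrm{d}t=\tfrac{X^2}{2}\bigl(J_m'(X)^2+(1-m^2/X^2)J_m(X)^2\bigr)$ is \emph{negative} at $X=k_{l_m}$, and ``keeping the positive $J_m'(k_{l_m})^2$ piece'' does not give a lower bound: dropping a negative term yields an \emph{upper} bound. Worse, in the pre-turning regime the two terms nearly cancel: the same Debye asymptotics give $J_m'(x)=\sqrt{m^2/x^2-1}\,J_m(x)\,(1+\mathcal{O}(1/m))$, so $J_m'(k_{l_m})^2-(m^2/k_{l_m}^2-1)J_m(k_{l_m})^2$ is a difference of quantities agreeing to leading order, of relative size $\mathcal{O}(1/m)$. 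The difference is of course positive (it equals a positive integral), but before you can assert that the denominator is at least $m^{-A}J_m(k_{l_m})^2$ for some fixed $A$ — which is all your final step needs — you must quantify what survives the cancellation, and your proposal contains no device for this. The repair is not hard but requires an additional idea: for instance, use that $tJ_m(t)^2$ is increasing on $[0,m]$ to get $\int_0^{k_{l_m}}tJ_m(t)^2\,\mathrm{d}t\ge (k_{l_m}-1)J_m(k_{l_m}-1)^2$, then show $J_m(k_{l_m}-1)\ge c(\mathbf{n})J_m(k_{l_m})$ via the Riccati-type bound $J_m'/J_m\le C$ on this range; or carry the Debye expansions of both $J_m$ and $J_m'$ to second order. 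This is precisely the difficulty the paper engineers around: its denominator lower bound is the tangent-line triangle at $r=1$, namely $\int_0^1 rJ_m^2(k_{l_m}r)\,\mathrm{d}r\ge \tfrac{1}{2}J_m^3(k_{l_m})/\bigl(J_m(k_{l_m})+2k_{l_m}J_m'(k_{l_m})\bigr)$, which uses only convexity of $rJ_m^2(k_{l_m}r)$ (a consequence of the Bessel ODE and $k_{l_m}r\le m$) together with the one-point \emph{upper} bound of Lemma~\ref{lem:3}, and involves no cancellation at all. Two smaller points: your numerator step invokes Lemma~\ref{lem:3} at $x=\tau k_{l_m}$ rather than $k_{l_m}$ — legitimate, since Krasikov's bound holds for all $0<x<\sqrt{(m+1)(m+3)}$, but it should be said (or avoided: monotonicity of $tJ_m(t)^2$ already gives $\int_0^{\tau k_{l_m}}tJ_m(t)^2\,\mathrm{d}t\le(\tau k_{l_m})^2J_m(\tau k_{l_m})^2$ with no derivative needed, which is what the paper does).
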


\begin{proof}
Let $\beta_{m}=1$ in \eqref{eq:d1}. Then one has
\begin{equation*}
\begin{aligned}
\left\|v_{m}\right\|_{L^{2}(\Omega_{\tau})}^{2} &=\int_{\Omega_{\tau}}\left|J_{m}(k_{l_{m}}|x|)\right|^{2} \mathrm{d} x \\
&=2 \pi \int_{0}^{\tau}\left|J_{m}(k_{l_{m}} r)\right|^{2} r \mathrm{d} r\\
&=2 \pi \int_{0}^{\tau}rJ^{2}_{m}(k_{l_{m}} r)
\mathrm{d} r.
\end{aligned}
\end{equation*}
Similarly,
\begin{equation*}
\left\|v_{m}\right\|_{L^{2}(\Omega)}^{2} =2 \pi
\int_{0}^{1}rJ^{2}_{m}(k_{l_{m}} r)  \mathrm{d} r\
\text{.}
\end{equation*}
Set $f(r)=rJ_m^2(k_{l_{m}}r)$. By straightforward calculations, we have
\begin{eqnarray*}
f'(r)&=&J_m^2(k_{l_{m}}r)+2k_{l_{m}}rJ_m(k_{l_{m}}r)J_m'(k_{l_{m}}r), \nonumber\\
f''(r)&=&4k_{l_{m}}J_m(k_{l_{m}}r)J_m'(k_{l_{m}}r)+2k_{l_{m}}^2rJ_m^{'2}(k_{l_{m}}r)+2k_{l_{m}}^2rJ_m(k_{l_{m}}r)J_m'(k_{l_{m}}r) \nonumber\\
&=&4k_{l_{m}}J_m(k_{l_{m}}r)J_m'(k_{l_{m}}r)+2k_{l_{m}}^2rJ_m^{'2}(k_{l_{m}}r)\\
&+&2k_{l_{m}}^2rJ_m(k_{l_{m}}r)\left((\frac{m^2}{k_{l_{m}}^2r^2}-1)J_{m}(k_{l_{m}}r)-\frac{1}{k_{l_{m}}r}J_m'(k_{l_{m}}r)\right)\nonumber\\
&=&2k_{l_{m}}J_m(k_{l_{m}}r)J_m'(k_{l_{m}}r)+2k_{l_{m}}^2rJ_m^{'2}(k_{l_{m}}r)+2k_{l_{m}}^2rJ_m(k_{l_{m}}r)
\left(\frac{m^2}{k_{l_{m}}^2r^2}-1\right)J_{m}(k_{l_{m}}r). \nonumber
\end{eqnarray*}
Noting that $k_{l_{m}}r\leqslant m$, one clearly has
\begin{equation}\label{eq:ff1}
\frac{m^2}{k_{l_{m}}^2r^2}-1\geq0,\
f^{''}(r)\geq0,\quad r\in[0,1].
\end{equation}
Hence $f(r)$ is a convex function on $[0,1]$. Therefore, $\int_{0}^{1}f(r)dr$ is bigger than the area of the triangle under the tangent of $f(1)$ (see \ref{fig:1} for a schematic illustration).
\begin{figure}[h]
\centering
\includegraphics[width=0.4\textwidth]{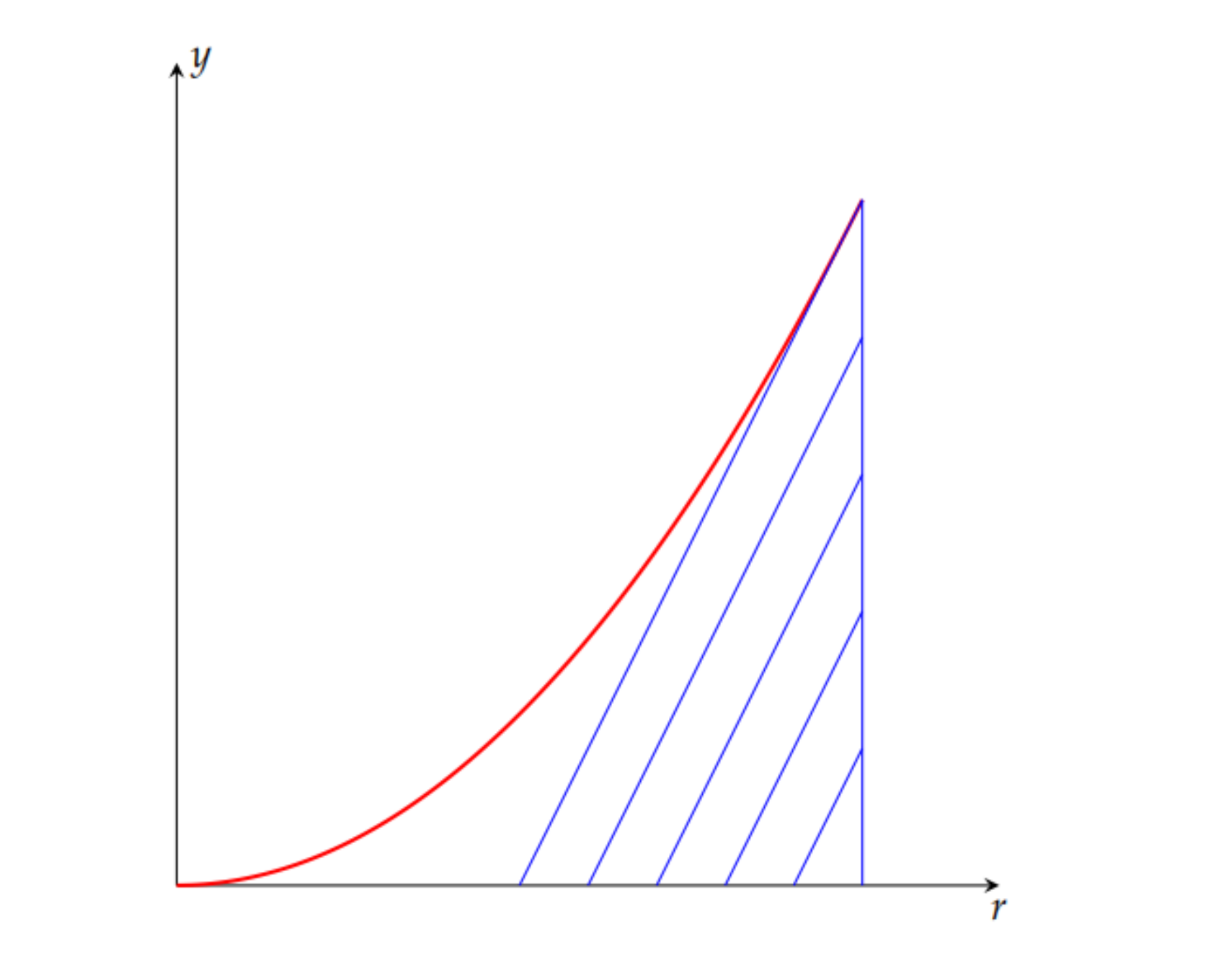}
\caption{\label{fig:1} Schematic illustration of
$\int_{0}^{1}f(r)dr$ which is bigger than the area of the triangle
under the tangent of $f(1)$. }
\end{figure}
The hypotenuse of the aforementioned triangle is
\begin{equation*}
p=(J^2_{m}(k_{l_{m}})+2k_{l_{m}}J_{m}(k_{l_{m}})J'_{m}(k_{l_{m}}))(r-1)+J^2_{m}(k_{l_{m}}),
\end{equation*}
and the lengths of its base and height are respectively $l:=\frac{J_{m}(k_{l_{m}})}{J_{m}(k_{l_{m}})+2k_{l_{m}}J'_{m}(k_{l_{m}})}
$ and $h:=J^2_{m}(k_{l_{m}})$. Hence, the area of the triangle is
\begin{equation*}
\begin{aligned}
S_{\triangle}=\frac{1}{2}lh&=\frac{\frac{1}{2}J^3_{m}(k_{l_{m}})}{J_{m}(k_{l_{m}})+2k_{l_{m}}J'_{m}(k_{l_{m}})}\\
&\leqslant\int_{0}^{1}rJ^2_{m}(k_{l_{m}}r)\mathrm{d}r.
\end{aligned}
\end{equation*}
Since $f$ is monotonically increasing, one has $\int_{0}^{\tau}f(r)\mathrm{d}r\leqslant\tau
f(\tau)$,
\begin{equation*}
\int_{0}^{\tau}rJ^2_{m}(k_{l_{m}}r)\mathrm{d}r\leqslant
\tau^2J^2_{m}(k_{l_{m}}\tau).
\end{equation*}
Therefore, it holds that
\begin{equation*}
\begin{aligned}
\frac{\|v_m\|^2_{L^2(\Omega_{\tau})}}{\|v_m\|^2_{L^2(\Omega)}}&=\frac{\int_{0}^{\tau}rJ^2_{m}(k_{l_{m}}r)\mathrm{d}r}
{\int_{0}^{1}rJ^2_{m}(k_{l_{m}}r)\mathrm{d}r}\\
&\leqslant \displaystyle \frac{\tau^2J^2_{m}(k_{l_{m}}\tau)}{\frac{\frac{1}{2}J^3_{m}(k_{l_{m}})}{J_{m}(k_{l_{m}})+2k_{l_{m}}J'_{m}(k_{l_{m}})}}\\
&\leqslant
2\tau^2\left(\frac{J_{m}(k_{l_{m}}\tau)}{J_{m}(k_{l_{m}})}\right)^2\left(1+2k_{l_{m}}\frac{J'_{m}(k_{l_{m}})}{J_{m}(k_{l_{m}})}\right).
\end{aligned}
\end{equation*}
By virtue of Lemma~\ref{lem:3}, it further holds that
\begin{equation}\label{eq:gg1}
\begin{aligned}
\frac{\|v_m\|^2_{L^2(\Omega_{\tau})}}{\|v_m\|^2_{L^2(\Omega)}}
&\leqslant 2\tau^2\left(\frac{J_{m}(k_{l_{m}}\tau)}{J_{m}(k_{l_{m}})}\right)^2\left(1+2k_{l_{m}}\frac{J'_{m}(k_{l_{m}})}{J_{m}(k_{l_{m}})}\right)\\
&\leqslant
36\mathbf{n}m^4\tau^2\left(\frac{J_{m}(k_{l_{m}}\tau)}{J_{m}(k_{l_{m}})}\right)^2\
\text{.}
\end{aligned}
\end{equation}
Next, $\frac{J_{m}(k_{l_{m}}\tau)}{J_{m}(k_{l_{m}})}$ in \eqref{eq:gg1} can be estimated by the Carlini formula (see formula (30.1) in \cite{Kor02}):
\begin{equation}\label{eq:vm}
\begin{aligned}
\left|\frac{J_{m}(k_{l_{m}}\tau)}{J_{m}(k_{l_{m}})}\right|&=
\left|\frac{\frac{(m (\frac{k_{l_{m}}\tau}{m}))^{m}
\exp \left\{m \sqrt{1-(\frac{k_{l_{m}}\tau}{m})^{2}}\right\}
\exp (-V(m,\frac{k_{l_{m}}\tau}{m}))}{e^{m} \Gamma(m+1)(1-(\frac{k_{l_{m}}\tau}{m})^{2})^{1 / 4}
\left\{1+\sqrt{1-(\frac{k_{l_{m}}\tau}{m})^{2}}\right\}^{m}}}{\frac{(m (\frac{k_{l_{m}}}{m}))^{m}
\exp \left\{m \sqrt{1-(\frac{k_{l_{m}}}{m})^{2}}\right\} \exp (-V(m,\frac{k_{l_{m}}}{m}))}{e^{m}
\Gamma(m+1)(1-(\frac{k_{l_{m}}}{m})^{2})^{1 / 4}\left\{1+\sqrt{1-(\frac{k_{l_{m}}}{m})^{2}}\right\}^{m}}}\right|\\
&=\left|\left(\frac{1-(\frac{k_{l_{m}}}{m})^{2}}{1-(\frac{k_{l_{m}}\tau}{m})^{2}}\right)^{\frac{1}{4}}\times
\left(\frac{\frac{(\frac{k_{l_{m}}\tau}{m})e^{\sqrt{1-(\frac{k_{l_{m}}\tau}{m})^2}}}
{1+\sqrt{1-(\frac{k_{l_{m}}\tau}{m})^2}}}{\frac{(\frac{k_{l_{m}}}{m})e^{\sqrt{1-(\frac{k_{l_{m}}}{m})^2}}}
{1+\sqrt{1-(\frac{k_{l_{m}}}{m})^2}}}\right)^m\times e^{V(m,\frac{k_{l_{m}}}{m})-V(m,\frac{k_{l_{m}}\tau}{m})}\right|\\
&:=I_1\times I_2\times I_3.
\end{aligned}
\end{equation}
Next, we estimate the terms $I_i, i=1,2,3$ in \eqref{eq:vm}. Since
\begin{equation*}
\frac{1}{\mathbf{n}}<\frac{k_{l_{m}}}{m}<\frac{1+\mathbf{n}}{2\mathbf{n}}\ ,
\end{equation*}
we first have
\begin{equation}\label{eq:I1}
I_1:=\left|\left(\frac{1-(\frac{k_{l_{m}}}{m})^{2}}{1-(\frac{k_{l_{m}}\tau}{m})^{2}}\right)^{\frac{1}{4}}\right|<\frac{1}{1-\frac{\mathbf{n}^2+2\mathbf{n}+1}{4\mathbf{n}^2}}<\frac{1}{\mathbf{n}-1}\
\text{.}
\end{equation}
For $I_3$, since the limit of $\frac{k_{l_m}}{m}$ is strictly smaller than $1$, one has
\begin{equation*}
e^{V(m,\frac{k_{l_{m}}}{m})}=1+o(1),\quad
m\rightarrow\infty,
\end{equation*}
and hence
\begin{equation*}
I_3:=\big|e^{V(m,\frac{k_{l_{m}}}{m})-V(m,\frac{k_{l_{m}}\tau}{m})}\big|=\frac{1+o(1)}{1+o(1)}.
\end{equation*}
Therefore there exists a sufficiently large $m_0$ such that when $m>m_0$
\begin{equation}\label{eq:I2}
I_3=|e^{V(m,\frac{k_{l_{m}}}{m})-V(m,\frac{k_{l_{m}}\tau}{m})}|<2.
\end{equation}
For $I_2$, we introduce the auxiliary function $\varphi$:
\begin{equation*}
\varphi(x)=\frac{xe^{\sqrt{1-x^2}}}{1+\sqrt{1-x^2}}.
\end{equation*}
Since $\varphi'(x)=\frac{(1-x^2+\sqrt{1-x^2})e^{\sqrt{1-x^2}}}{(1+\sqrt{1-x^2})^2}>0$, $x\in(0,1)$,  $\varphi$ is monotonically increasing. Noting that
\begin{equation*}
\lim\limits_{m\rightarrow\infty}\frac{k_{l_{m}}}{m}-\frac{k_{l_{m}}\tau}{m}=\frac{(1-\tau)}{\mathbf{n}}>0,
\end{equation*}
there exists $\delta(\tau,\mathbf{n})>0$ such that
\begin{equation}\label{eq:I3}
\frac{\varphi(\frac{k_{l_{m}}}{m})}{\varphi(\frac{k_{l_{m}}\tau}{m})}<1-\delta(\tau,\mathbf{n}).
\end{equation}
Combining (\ref{eq:I1}), (\ref{eq:I2}) and (\ref{eq:I3}), together with straightforward calculations, one can show that
\begin{equation*}
\frac{\left\|v_{m}\right\|_{L^{2}(\Omega_{\tau})}^{2}}{\left\|v_{m}\right\|_{L^{2}(\Omega)}^{2}}
\leqslant
144\frac{\mathbf{n}}{(\mathbf{n}-1)^2}m^4\tau^2(1-\delta(\tau,\mathbf{n}))^{2m}.
\end{equation*}
That is
\begin{equation*}
\lim\limits_{m\rightarrow\infty}\frac{\left\|v_{m}\right\|_{L^{2}(\Omega_{\tau})}^{2}}{\left\|v_{m}\right\|_{L^{2}(\Omega)}^{2}}=0.
\end{equation*}

The proof is complete.
\end{proof}


\begin{theorem}\label{thm:surloc2}
Consider the same setup as that in Theorem~\ref{thm:surloc}. The corresponding transmission eigenfunctions $\{w_m\}_{m\in\mathbb{N}}$ are also surface-localized in the sense that
\begin{equation}\label{eq:hh0}
\lim _{m \rightarrow \infty}
\frac{\left\|w_{m}\right\|_{L^{2}(\Omega_{\tau})}}{\left\|w_{m}\right\|_{L^{2}(\Omega)}}=0.
\end{equation}
\end{theorem}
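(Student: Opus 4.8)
The plan is to follow the same template as the proof of Theorem~\ref{thm:surloc}, but with the roles of the ``evanescent'' and ``oscillatory'' parts of the Bessel profile interchanged, which is precisely what makes $w_m$ subtle. Normalizing $\alpha_m=1$ in \eqref{eq:series2} (the ratio in \eqref{eq:hh0} is scale-free), passing to polar coordinates as in Theorem~\ref{thm:surloc} reduces the quantity of interest to
\[
\frac{\|w_m\|_{L^2(\Omega_\tau)}^2}{\|w_m\|_{L^2(\Omega)}^2}=\frac{\int_0^\tau r J_m^2(\tilde k_m r)\,\mathrm{d}r}{\int_0^1 r J_m^2(\tilde k_m r)\,\mathrm{d}r},\qquad \tilde k_m:=k_{l_m}\mathbf{n}.
\]
The first step is to locate $\tilde k_m$. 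Since $k_{l_m}\in(j_{m,s_0}/\mathbf{n},j_{m,s_0+1}/\mathbf{n})$ by \eqref{eq:te1}, one has $\tilde k_m\in(j_{m,s_0},j_{m,s_0+1})$, and the zero asymptotics (formula (1.2) in \cite{WQ99}) already used in Lemma~\ref{lem:3} give $\tilde k_m=m+\mathcal{O}(m^{1/3})$; in particular $\tilde k_m\asymp m$, $\tilde k_m>m$, and $m/\tilde k_m\to 1^-$. This is the essential difference from the $v_m$ case: the argument $\tilde k_m r$ now crosses the turning point $t=m$ inside the unit ball, at $r=m/\tilde k_m$ close to $1$, so $J_m(\tilde k_m r)$ is monotone (evanescent) on $[0,m/\tilde k_m]$ but oscillatory in the thin boundary layer $(m/\tilde k_m,1)$, where the bulk of the $L^2$-energy concentrates.

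For the numerator I would argue exactly as for $v_m$. For any fixed $\tau\in(0,1)$ one has $\tilde k_m\tau<m<j'_{m,1}$ once $m$ is large, so $J_m$ and $J_m'$ are positive on $[0,\tilde k_m\tau]$ and $r\mapsto rJ_m^2(\tilde k_m r)$ is increasing on $[0,\tau]$; hence $\int_0^\tau rJ_m^2(\tilde k_m r)\,\mathrm{d}r\le \tau^2 J_m^2(\tilde k_m\tau)$. Since $\tilde k_m\tau/m\to\tau<1$, a direct application of the Carlini asymptotics \eqref{eq:vm} to $J_m(\tilde k_m\tau)$ shows that $J_m^2(\tilde k_m\tau)$ is exponentially small, of the order $C m^{-1}(1-\delta(\tau,\mathbf{n}))^{2m}$.

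The crux, and the main obstacle, is a lower bound on the denominator that is only polynomially (not exponentially) small; the convexity/tangent-line argument of Theorem~\ref{thm:surloc} is unavailable here because $rJ_m^2(\tilde k_m r)$ fails to be convex on all of $[0,1]$ once $\tilde k_m r>m$. I would handle this by the substitution $t=\tilde k_m r$, writing $\int_0^1 rJ_m^2(\tilde k_m r)\,\mathrm{d}r=\tilde k_m^{-2}\int_0^{\tilde k_m} tJ_m^2(t)\,\mathrm{d}t\ge \tilde k_m^{-2}\int_0^{j_{m,1}} tJ_m^2(t)\,\mathrm{d}t$, where the inequality uses $\tilde k_m>j_{m,s_0}\ge j_{m,1}$. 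It then remains to bound the first ``hump'' from below, which is exactly the Airy/turning-point regime: $J_m(t)\asymp m^{-1/3}$ on a window of width $\asymp m^{1/3}$ around $t\approx m$, so $\int_0^{j_{m,1}} tJ_m^2(t)\,\mathrm{d}t\gtrsim m\cdot m^{-2/3}\cdot m^{1/3}=m^{2/3}$ and, since $\tilde k_m^2\asymp m^2$, the denominator is $\gtrsim m^{-4/3}$. (Equivalently, one may invoke the exact Lommel identity $\int_0^1 rJ_m^2(\tilde k_m r)\,\mathrm{d}r=\tfrac12[J_m'(\tilde k_m)^2+(1-m^2/\tilde k_m^2)J_m(\tilde k_m)^2]$ and show its right-hand side is not exponentially small.) Combining the two estimates yields
\[
\frac{\|w_m\|_{L^2(\Omega_\tau)}^2}{\|w_m\|_{L^2(\Omega)}^2}\le C\,m^{4/3}J_m^2(\tilde k_m\tau)\le C'\,m^{1/3}(1-\delta(\tau,\mathbf{n}))^{2m}\longrightarrow 0,
\]
which is \eqref{eq:hh0}. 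The delicate point throughout is controlling the turning-point region, where neither the monotone upper bound nor the clean convex lower bound of Theorem~\ref{thm:surloc} directly applies.
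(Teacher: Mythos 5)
Your proof is correct, and it deviates from the paper's in exactly the one step where the argument for $w_m$ genuinely differs from that for $v_m$. The numerator treatment is the same in both: since $\mathbf{n}k_{l_m}\tau/m\to\tau<1$, the map $r\mapsto rJ_m^2(\mathbf{n}k_{l_m}r)$ is increasing on $[0,\tau]$, and the Carlini formula makes $J_m^2(\mathbf{n}k_{l_m}\tau)$ exponentially small. For the denominator, however, the paper does not abandon the convexity device as you do: it first proves $\mathbf{n}k_{l_m}\tau<j_{m,1}'<\mathbf{n}k_{l_m}$ for large $m$ (via $j_{m,1}'/(\mathbf{n}k_{l_m})\to 1$, using the \cite{WQ99} zero bounds), truncates the denominator integral to $[0,\,j_{m,1}'/(\mathbf{n}k_{l_m})]$, and reruns the tangent-line/convexity estimate of Theorem~\ref{thm:surloc} on that subinterval, arriving at
\begin{equation*}
\frac{\left\|w_{m}\right\|_{L^{2}(\Omega_{\tau})}^{2}}{\left\|w_{m}\right\|_{L^{2}(\Omega)}^{2}}\leqslant
36\,\mathbf{n}\,m^{4}\tau^{2}\left(\frac{J_{m}(\mathbf{n}k_{l_{m}}\tau)}{J_{m}(j_{m,1}')}\right)^{2},
\end{equation*}
whereas you substitute $t=\mathbf{n}k_{l_m}r$, discard everything beyond the first zero $j_{m,1}$, and lower-bound the first hump directly by the turning-point behaviour $J_m\asymp m^{-1/3}$ on a window of width $\asymp m^{1/3}$, obtaining a denominator $\gtrsim m^{-4/3}$ (with the Lommel identity as a clean alternative). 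The trade-off is instructive. The paper's route stays entirely within the elementary toolkit already built for Theorem~\ref{thm:surloc} (convexity plus Lemma~\ref{lem:3}), but to conclude from its final display one still needs that $J_m(j_{m,1}')$ is only \emph{polynomially} small---i.e.\ precisely the turning-point estimate $J_m(j_{m,1}')\asymp m^{-1/3}$ that you invoke openly, which the paper leaves implicit in ``readily implies'' (note the Carlini formula itself is not applicable at $j_{m,1}'>m$). Your version makes that dependence explicit, yields the slightly sharper bound $Cm^{1/3}(1-\delta)^{2m}$ in place of the paper's implicit $Cm^{14/3}(1-\delta)^{2m}$, and your observation that convexity of $rJ_m^2(\mathbf{n}k_{l_m}r)$ fails past $r=m/(\mathbf{n}k_{l_m})$ is exactly the reason the paper must truncate at all. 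The only thing to tighten is the Airy-window claim $J_m\asymp m^{-1/3}$ on width $\asymp m^{1/3}$: it is standard, but should be backed by a citation to uniform turning-point asymptotics (e.g.\ Olver, or \cite{Abr}), an input of the same nature as the \cite{WQ99} and \cite{Kra06} bounds the paper already relies on.
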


\begin{proof}
Since $w_m(x)=J_{m}(\mathbf{n}k_{l_{m}}x)$, one has
\begin{equation*}
\begin{aligned}
\mathbf{n}k_{l_{m}}&>\mathbf{n}\frac{j_{m,s_{0}}}{\mathbf{n}}>m\Big(1+2\big(\frac{s_{0}+1}{m}\big)^{\frac{2}{3}}\Big).
\end{aligned}
\end{equation*}
Next, we show that for $m$ sufficiently large, one has $\mathbf{n}k_{l_{m}}\tau<j'_{m,1}$. Indeed, it can be deduced that
\begin{equation}\label{eq:hh1}
\begin{aligned}
\frac{m}{\mathbf{n}k_{l_{m}}}<\frac{j'_{m,1}}{\mathbf{n}k_{l_{m}}}
<\frac{m(1+3(\frac{1}{m})^{\frac{2}{3}}+2(\frac{1}{m})^{\frac{4}{3}})}{m(1+2(\frac{s_{0}+1}{m})^{\frac{2}{3}})},
\end{aligned}
\end{equation}
where we make use of the following fact
\begin{equation*}
\begin{aligned}
\frac{m}{j_{m,s_0+1}}=\frac{m}{\mathbf{n}\cdot\frac{j_{m,s_0+1}}{\mathbf{n}}}<\frac{m}{\mathbf{n}k_{l_{m}}}<\frac{m}{\mathbf{n}\cdot\frac{j_{m,s_0}}{\mathbf{n}}}=\frac{m}{j_{m,s_0}}.
\end{aligned}
\end{equation*}
Since
\begin{equation}\label{eq:hh2}
\begin{aligned}
&\lim\limits_{m\rightarrow\infty}\frac{m}{\mathbf{n}k_{l_{m}}}=1,\\
&\lim\limits_{m\rightarrow\infty}\frac{m(1+3(\frac{1}{m})^{\frac{2}{3}}+2(\frac{1}{m})^{\frac{4}{3}})}{m(1+2(\frac{s_{0}+1}{m})^{\frac{2}{3}})}=1,
\end{aligned}
\end{equation}
we thus have from \eqref{eq:hh1} that
\begin{equation*}
\lim\limits_{m\rightarrow\infty}\frac{j'_{m,1}}{\mathbf{n}k_{l_{m}}}=1.
\end{equation*}
Hence, for $\varepsilon=\frac{1}{2}(1-\tau)$, there exists a $m_0\in\mathbb{N}$ such that when $m>m_0$, we have
\begin{equation*}
\frac{j'_{m,1}}{\mathbf{n}k_{l_{m}}}>1-\varepsilon>\frac 1 2 (1+\tau)>\tau.
\end{equation*}
That is, $\mathbf{n}k_{l_{m}}\tau<j'_{m,1}$.
Next, by using the Carlini formula again, we have
\begin{equation*}
\begin{aligned}\frac{\left\|w_{m}\right\|_{L^{2}(\Omega_{\tau})}^{2}}{\left\|w_{m}\right\|_{L^{2}(\Omega)}^{2}}&=\frac{\int_{0}^{\tau} r J_{m}^{2}(\mathbf{n}k_{l_{m}} r) \mathrm{d} r}{\int_{0}^{1} r J_{m}^{2}(\mathbf{n}k_{l_{m}} r) \mathrm{d} r}\\
&\leqslant\frac{\int_{0}^{\tau} r J_{m}^{2}(\mathbf{n}k_{l_{m}}
r) \mathrm{d}
r}{\int_{0}^{\frac{j'_{m,1}}{\mathbf{n}k_{l_{m}}}} r
J_{m}^{2}(\mathbf{n}k_{l_{m}} r) \mathrm{d} r}.
\end{aligned}
\end{equation*}
The rest of the proof is similar to that of Theorem ~\ref{thm:surloc}, and by straightforward calculations one can show that
\begin{equation*}
\frac{\left\|w_{m}\right\|_{L^{2}(\Omega_{\tau})}^{2}}{\left\|w_{m}\right\|_{L^{2}(\Omega)}^{2}}\leqslant
36 \mathbf{n} m^{4} \tau^{2}\left(\frac{J_{m}(\mathbf{n}k_{l_{m}}
\tau)}{J_{m}(j'_{m,1})}\right)^{2},
\end{equation*}
which readily implies \eqref{eq:hh0}.

The proof is complete.
\end{proof}

\subsection{Three-dimensional result}

The proof of Theorem~\ref{thm:main1} in three dimensions is similar to the two-dimensional case in Theorems~\ref{thm:surloc} and \ref{thm:surloc2}. We only sketch the necessary modifications in what follows.

\begin{theorem}
Consider the same setup as that in Theorem~\ref{thm:main1} in $\mathbb{R}^3$ and assume that $\mathbf{n}>1$ and $\tau\in (0, 1)$ is fixed. Then \eqref{eq:result1} holds true.

\end{theorem}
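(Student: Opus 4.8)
The plan is to reduce the three-dimensional case to the one-variable estimates already established in Theorems~\ref{thm:surloc} and \ref{thm:surloc2}, by exploiting the relation between spherical and half-integer-order Bessel functions. In $\mathbb{R}^3$ one expands in spherical harmonics, so that the radial factors of $w$ and $v$ are the spherical Bessel functions $j_m(k\mathbf{n}r)$ and $j_m(kr)$, where $j_m(t)=\sqrt{\pi/(2t)}\,J_{m+1/2}(t)$. Imposing the two transmission conditions at $r=1$ yields a coefficient relation of the form \eqref{eq:d1} together with a characteristic function whose roots are the eigenvalues. Using $j_m'(t)=\sqrt{\pi/(2t)}\,[\,J_{m-1/2}(t)-\tfrac{m+1}{t}J_{m+1/2}(t)\,]$ (obtained from the recurrence $J_\nu'=J_{\nu-1}-\tfrac{\nu}{t}J_\nu$), the lower-order terms cancel and the characteristic function collapses to
\begin{equation*}
g_m(k)=J_{m-1/2}(k)\,J_{m+1/2}(k\mathbf{n})-\mathbf{n}\,J_{m+1/2}(k)\,J_{m-1/2}(k\mathbf{n}),
\end{equation*}
which is precisely $f_\nu(k)$ from \eqref{eq:d2} with the integer order $m$ replaced by the half-integer order $\nu:=m+\tfrac12$. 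Thus the existence analysis is identical to the planar one.

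The second ingredient is that the three-dimensional volume weight $r^2\,\mathrm{d}r$ combines with the factor $t^{-1}$ in $|j_m(t)|^2$ to produce
\begin{equation*}
\int_0^\rho |j_m(k_{l_m}r)|^2\,r^2\,\mathrm{d}r=\frac{\pi}{2k_{l_m}}\int_0^\rho r\,J_{\nu}^2(k_{l_m}r)\,\mathrm{d}r,\qquad \nu=m+\tfrac12,
\end{equation*}
so that the ratio $\|\psi_m\|_{L^2(\Omega_\tau)}^2/\|\psi_m\|_{L^2(\Omega)}^2$ is exactly the quantity estimated in the two-dimensional argument, but with $J_m$ replaced by $J_\nu$. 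I would therefore fix $s_0$, set $\nu=m+\tfrac12$, and re-run Lemmas~\ref{lem:1}--\ref{lem:3} and Theorems~\ref{thm:surloc}--\ref{thm:surloc2} verbatim: the asymptotic expansion of $j_{\nu,s}$ from \cite{WQ99}, the interlacing of the zeros of $J_{\nu-1}$ and $J_\nu$, the Krasikov bound \eqref{eq:J'quoJ} from \cite{Kra06}, and the Carlini estimate \eqref{eq:vm} from \cite{Kor02} are all valid for arbitrary real order, so each step transfers with only cosmetic changes. In particular the eigenvalue $k_{l_m}$ again lies in $\big(j_{\nu,s_0}/\mathbf{n},\,j_{\nu,s_0+1}/\mathbf{n}\big)$, the map $r\mapsto rJ_\nu^2(k_{l_m}r)$ is convex on $[0,1]$ because $k_{l_m}r\le\nu$, and Carlini's formula produces the same exponential factor $(1-\delta(\tau,\mathbf{n}))^{2m}$ that defeats the polynomial prefactor $m^4$, forcing both ratios to zero.

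The main obstacle is not conceptual but one of verifying that the half-integer shift does not spoil the threshold inequalities. Concretely, the analogue of Lemma~\ref{lem:1}, namely $j_{\nu,s_0}/\mathbf{n}\le\nu$, and the sandwich $\tfrac{\nu}{\mathbf{n}}\le k_{l_m}\le\tfrac{(\mathbf{n}+1)\nu}{2\mathbf{n}}$, must be re-checked with $\nu=m+\tfrac12$, since these underpin both the convexity condition $k_{l_m}r\le\nu$ and the domain restriction $0<x<\sqrt{(\nu+1)(\nu+3)}$ required by \eqref{eq:J'quoJ}. As all of these are asymptotic statements in $m$ and the replacement $m\mapsto m+\tfrac12$ is of lower order, they persist for $m$ large. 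One must also confirm, exactly as in \eqref{eq:ddd3}, that $g_m$ changes sign across the endpoints $j_{\nu,s_0}/\mathbf{n}$ and $j_{\nu,s_0+1}/\mathbf{n}$ via the sign pattern of $J_\nu$ and $J_{\nu-1}$. Once these verifications are in place, \eqref{eq:result1} in $\mathbb{R}^3$ follows by the same two-step localization argument as in the planar case, completing the proof.
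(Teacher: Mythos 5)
Your proposal is correct and is essentially the paper's own argument: the paper likewise reduces the three-dimensional case to half-integer order $\nu=m+\tfrac12$ via $j_m(t)=\sqrt{\pi/(2t)}\,J_{m+1/2}(t)$, arrives at the same characteristic function $f_{m+\frac12}(k)=J_{m-\frac12}(k)J_{m+\frac12}(k\mathbf{n})-\mathbf{n}J_{m+\frac12}(k)J_{m-\frac12}(k\mathbf{n})$, places $k_{l_m}$ in $\big(j_{m+\frac12,s_0}/\mathbf{n},\,j_{m+\frac12,s_0+1}/\mathbf{n}\big)$ by the identical sign-change computation, and then re-runs the convexity, Krasikov and Carlini estimates with $J_{m+\frac12}$ in place of $J_m$. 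Your observation that the volume weight $r^2$ combines with the factor $t^{-1}$ in $|j_m(t)|^2$ so that the $L^2$-ratio reduces exactly to the planar quantity $\int_0^\rho r\,J_\nu^2(k_{l_m}r)\,\mathrm{d}r$ is precisely the reduction the paper uses.
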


\begin{proof}
By Fourier expansion, the solutions to \eqref{eq:trans1} in $\mathbb{R}^3$ have the following series expansions:
\begin{equation}\label{eq:ee1}
\begin{aligned}
w(x)&=\sum_{m=0}^{\infty} \sum_{l=-m}^{m} \alpha_{m}^{l} j_{m}(k \mathbf{n}|x|) Y_{m}^{l}(\hat{x}),\\
v(x)&=\sum_{m=0}^{\infty} \sum_{l=-m}^{m} \beta_{m}^{l}
j_{m}(k |x|) Y_{m}^{l}(\hat{x}),
\end{aligned}
\end{equation}
where $\hat x:=x/|x|$, $Y_{m}^{l}$ is the Spherical harmonic function of order $m$ and degree $l$, and
\begin{equation}\label{eq:ee2}
j_{m}(|x|)=\sqrt{\frac{\pi}{2|x|}} J_{m+1 /
2}(|x|),
\end{equation}
is known as the \emph{spherical Bessel function}.
In what follows, we shall look for surface-localized transmission eigenfunctions of the following form:
\begin{equation}\label{eq:3d1}
\begin{aligned}
w_{l,m}(x)&= \alpha_{m} j_{m}(k \mathbf{n}|x|)Y_{m}^{l}(\hat{x}),\\
v_{l,m}(x)&= \beta_{m} j_{m}(k
|x|)Y_{m}^{l}(\hat{x}),
\end{aligned}
\end{equation}
where $\alpha_m$ and $\beta_m$ are constants. By using the two transmission conditions
on $\partial\Omega$, together with straightforward calculations, one can show that
\begin{equation*}
\alpha_{m}=1, \quad
\beta_{m}=\mathbf{n}^{-\frac{1}{2}}\frac{J_{m+\frac{1}{2}}(k
\mathbf{n})}{J_{m+\frac{1}{2}}(k)} \alpha_{m},
\end{equation*}
and $k$ should be a root of the following function
\begin{equation}
f_{m+\frac{1}{2}}(k)=J_{m-\frac{1}{2}}(k)J_{m+\frac{1}{2}}(k\mathbf{n})-\mathbf{n}J_{m+\frac{1}{2}}(k)J_{m-\frac{1}{2}}(k\mathbf{n}).
\end{equation}
Next, we construct the desired transmission eigenvalues by showing that $f_{m+\frac{1}{2}}(k)$ has least one zero point in $\big(\frac{j_{m+\frac{1}{2}, s_{0}}}{\mathbf{n}},
\frac{j_{m+\frac{1}{2}, s_{0}+1}}{\mathbf{n}}\big)$.
Spherical Bessel function satisfies the same estimate in Lemma \ref{lem:2}. Then for any fixed $s_0$,
there exists a sufficiently large $m_{0}(\mathbf{n}, s_{0}) \in \mathbb{N}_{+}$ such that
when $m>m_{0}(\mathbf{n},s_{0})$, we have
\begin{equation*}
\frac{j_{m+\frac{1}{2}, s_{0}+1}}{\mathbf{n}} \leq m+\frac{1}{2}.
\end{equation*}
On the other hand, the spherical Bessel functions possess the following property \cite{LZ18}:
\begin{equation*}
m+\frac{1}{2} \leq j'_{m+\frac{1}{2}, 1}<j_{m+\frac{1}{2},
1}<j'_{m+\frac{1}{2}, 2}<j_{m+\frac{1}{2},
2}<j'_{m+\frac{1}{2},3}<\cdots.
\end{equation*}
Consider the function $f_{m+\frac{1}{2}}(k)$ in $(\frac{j_{m+\frac{1}{2}, s_{0}}}{\mathbf{n}},
\frac{j_{m+\frac{1}{2},s_{0}+1}}{\mathbf{n}})$. By the monotonicity of the Bessel function, we have
\begin{equation*}
J_{m+\frac{1}{2}}(k) \geq 0, \quad k \in\left[0,
j'_{m+\frac{1}{2},1}\right].
\end{equation*}
According to the formula (9.5.2) in \cite{Abr}, we know
\begin{equation*}
\begin{aligned}
&f_{m+\frac{1}{2}}\left(\frac{j_{m+\frac{1}{2}, s_{0}}}{\mathbf{n}}\right)
\cdot f_{m+\frac{1}{2}}\left(\frac{j_{m+\frac{1}{2}, s_{0}+1}}{\mathbf{n}}\right)
\\
=&\, \mathbf{n}^2 J_{m+\frac{1}{2}}\left(\frac{j_{m+\frac{1}{2}, s_{0}}}{\mathbf{n}}\right)
\cdot J_{m+\frac{1}{2}}\left(\frac{j_{m+\frac{1}{2}, s_{0}+1}}{\mathbf{n}}\right)
\cdot J_{m-\frac{1}{2}}(j_{m+\frac{1}{2}, s_{0}}) \cdot J_{m-\frac{1}{2}}(j_{m+\frac{1}{2}, s_{0}+1}) \\
\leq &\, \mathbf{n}^2(J_{m+\frac{1}{2}}(j_{m+\frac{1}{2}, 1}'))^{2} \cdot J_{m-\frac{1}{2}}(j_{m+\frac{1}{2}, s_{0}}) \cdot J_{m-\frac{1}{2}}(j_{m+\frac{1}{2}, s_{0}+1}) \\
<&\, 0.
\end{aligned}
\end{equation*}
Therefore by Rolle's theorem, $f_{m+\frac{1}{2}}(k)$ has at least one zero point in $\left(\frac{j_{m+\frac{1}{2},
s_{0}}}{\mathbf{n}},\frac{j_{m+\frac{1}{2},s_{0}+1}}{\mathbf{n}}\right)$.

For any fixed $s_0\in\mathbb{N}$, we denote the aforementioned zero point in $\left(\frac{j_{m+\frac{1}{2},s_{0}}}{\mathbf{n}},\frac{j_{m+\frac{1}{2},s_{0}+1}}{\mathbf{n}}\right)$ as $k_{l_{m}}$ (comparing to \eqref{eq:te1} in the two-dimensional case). Let $(w_m, v_m)$ be the transmission eigenfunctions associated to the eigenvalue $k_{l_m}$. Next we prove that $\{v_m\}_{m\in\mathbb{N}}$ are surface-localized on $\partial \Omega$. By straightforward calculations, one has
\begin{equation*}
\begin{aligned}
& \left\|v_{m}\right\|_{L^{2}(\Omega_{\tau})}^{2} =\int_{\Omega_{\tau}}\left|j_{m}(k_{l_{m}}|x|)\right|^{2} \mathrm{d} x \\
=& 2 \pi \int_{0}^{\tau}\left|j_{m}(k_{l_{m}} r)\right|^{2} r^2 \mathrm{d} r =2 \pi \int_{0}^{\tau} r^2 j_{m}^{2}(k_{l_{m}} r) \mathrm{d} r\\
=& 2 \pi \int_{0}^{\tau} r^2 \frac{\pi}{2r}J_{m+\frac{1}{2}}^{2}(k_{l_{m}} r) \mathrm{d} r=\pi^2\int_{0}^{\tau} rJ_{m+\frac{1}{2}}^{2}(k_{l_{m}}
r) \mathrm{d} r.
\end{aligned}
\end{equation*}
Hence, it holds that
\begin{equation}\label{eq:tau1}
\frac{\left\|v_{m}\right\|_{L^{2}(\Omega_{\tau})}^{2}}{\left\|v_{m}\right\|_{L^{2}(\Omega)}^{2}}=\frac{\int_{0}^{\tau}
rJ_{m+\frac{1}{2}}^{2}(k_{l_{m}} r) \mathrm{d}
r}{\int_{0}^{1} rJ_{m+\frac{1}{2}}^{2}(k_{l_{m}} r)
\mathrm{d} r}.
\end{equation}
Consider the function $\zeta(r)= rJ_{m+\frac{1}{2}}^{2}(k_{l_{m}} r)$. It can be shown that $\zeta$ is convex in $(0,1)$. Using this fact, one can further estimate that
\begin{equation*}
\begin{aligned}
\frac{\left\|v_{m}\right\|_{L^{2}(\Omega_{\tau})}^{2}}{\left\|v_{m}\right\|_{L^{2}(\Omega)}^{2}}&=\frac{\int_{0}^{\tau} rJ_{m+\frac{1}{2}}^{2}(k_{l_{m}} r) \mathrm{d} r}{\int_{0}^{1} rJ_{m+\frac{1}{2}}^{2}(k_{l_{m}} r) \mathrm{d} r}\\
&\leqslant2 \tau^{2}\left(\frac{J_{m+\frac{1}{2}}(k_{l_{m}}
\tau)}{J_{m+\frac{1}{2}}(k_{l_{m}})}\right)^{2}\left(1+2 k_{l_{m}}
\frac{J_{m+\frac{1}{2}}'(k_{l_{m}})}{J_{m+\frac{1}{2}}(k_{l_{m}})}\right).
\end{aligned}
\end{equation*}
Similar to Lemma~\ref{lem:3}, one can show that there exists constants $C$ and $\gamma$ such that
\begin{equation*}
\frac{k_{l_{m}}J'_{m+\frac{1}{2}}(k_{l_{m}})}{J_{m+\frac{1}{2}}(k_{l_{m}})}<Cm^{\gamma}.
\end{equation*}
Finally, by following a similar argument to the two-dimensional case and combining the above estimates, together with the use of the Carlini formula, one can show that
\begin{equation}\label{eq:kk1}
\lim\limits_{m\rightarrow\infty}\frac{\left\|v_{m}\right\|_{L^{2}(\Omega_{\tau})}}{\left\|v_{m}\right\|_{L^{2}(\Omega)}}=0.
\end{equation}
By following a completely similar argument to that of Theorem~\ref{thm:surloc2} in the two-dimensional case, one can show that \eqref{eq:kk1} also holds for $w_m$.

The proof is complete.
\end{proof}

\section*{Acknowledgement}
The work of Y. Deng was supported by NSF grant of China No. 11971487
and NSF grant of Hunan No. 2020JJ2038. The work of H Liu was
supported by a startup fund from City University of Hong Kong and
the Hong Kong RGC grants (projects 12302018, 12302919 and 12301420).
The work of K. Zhang was supported by the NSF grant of China No.
11871245.


\begin{thebibliography}{99}

\bibitem{Abr}
M.~Abramowitz and I. ~A.~Stegun, {\it Handbook of mathematical functions: with formulas, graphs, and mathematical tables}, vol. 55, Courier Corporation, 1964.


\bibitem{Bsource}
E.~{Bl{\aa}sten}, {\it Nonradiating sources and transmission eigenfunctions vanish at
  corners and edges}, arXiv:1803.10917

\bibitem{BLLW} E. Bl{\aa}sten, X. Li, H. Liu and Y. Wang, {\it On vanishing and localization near cusps of transmission
    eigenfunctions: a numerical study}, Inverse Problems, {\bf 33} (2017), 105001.

 \bibitem{EBL}
E. Bl{\aa}sten and Y.-H.  Lin, \emph{Radiating and non-radiating sources in elasticity}, Inverse Problems, {\bf 35}  (2019),  015005.

\bibitem{BL2016}
E. Bl{\aa}sten and H. Liu,  {\it On corners scattering stably, nearly non-scattering interrogating waves, and stable shape determination by a single far-field pattern}, Indiana Univ. Math. J., in press, 2019.

\bibitem{BL2017}
E. Bl{\aa}sten and H. Liu, {\it Recovering piecewise constant refractive indices by a single far-field pattern}, Inverse Problems, \textbf{36} (2020), 085005.

\bibitem{BL2018} E. Bl{\aa}sten and H. Liu, {\it Scattering by curvatures, radiationless sources, transmission eigenfunctions and inverse scattering problems}, arXiv:1808.01425

\bibitem{BL2017b}
E. Bl{\aa}sten and H. Liu, {\it On vanishing near corners of transmission eigenfunctions}, J. Funct. Anal., {\bf 273} (2017), no. 11, 3616--3632. Addendum, arXiv:1710.08089

%
\bibitem{BPS}
  {E. Bl{\aa}sten, L. P\"aiv\"arinta and J. Sylvester},  {\it Corners always scatter},
  Comm.\ Math.\ Phys., {\bf 331} (2014), 725--753.

\bibitem{BXL} E. Bl{\aa}sten, H. Liu and J. Xiao, {\it On an electromagnetic problem in a corner and its applications}, {Analysis \& PDE}, in press, 2020.

%



\bibitem{CHreview}
F. Cakoni and H. Haddar, {\it Transmission eigenvalues in inverse scattering theory}, in ``Inverse Problems and Applications: Inside Out II'', Math. Sci. Res. Inst. Publ., Vol. 60,  pp. 529--580, Cambridge Univ. Press., Cambridge, 2013.

\bibitem{CDHLW} Y.-T. Chow, Y. Deng, Y. He, H. Liu and X. Wang, {\it Surface-localized transmission eigenstates, super-resolution imaging and pseudo surface plasmon modes}, {SIAM J. Imaging Sci.}, to appear, 2021.

\bibitem{CKreview} D. Colton and R. Kress, {\it Looking back on inverse scattering theory}, SIAM Review, \textbf{60} (2018), no. 4, 779--807.




\bibitem{CX} F. Cakoni and J. Xiao, {\it On corner scattering for operators of divergence form and applications to inverse scattering}, Comm. Partial Differential Equations, in press, 2020.

\bibitem{CDL} X. Cao, H. Diao and H. Liu, {\it Determining a piecewise conductive medium body by a single far-field measurement}, {CSIAM Trans. Appl. Math.}, \textbf{1} (2020), 740-765.

\bibitem{DDL} Y. Deng, C. Duan and H. Liu, {\it On vanishing near corners of conductive transmission eigenfunctions}, arXiv:2011.14226

\bibitem{DCL}
H. Diao, X. Cao and H. Liu, {\it On the geometric structures of transmission eigenfunctions with a conductive boundary condition and applications}, Comm. Partial Differential
Equations, DOI:10.1080/03605302.2020.1857397, 2021.

\bibitem{Kor02}
{B.G. Korenev}, {\it Bessel functions and their applications},
Integral Transforms and Special Functions, \textbf{25} (2002), 272--282.

\bibitem{Kra06}
{ I. Krasikov}. {\it Uniform bounds for Bessel functions},
Applied Analysis, \textbf{12} (2006), 197--215.

\bibitem{Liureview} H. Liu, {\it On local and global structures of transmission eigenfunctions and beyond}, J. Inverse Ill-posed Probl., 2020,  DOI: https://doi.org/10.1515/jiip-2020-0099

\bibitem{LZ18}
{H. Liu and J. Zou}. {\it Zeros of the Bessel and spherical
Bessel functions and their applications for uniqueness in inverse
acoustic obstacle scattering}, IMA J. Appl. Math.,
\textbf{72} (2008), 817--831.

\bibitem{LSSZ} H. Liu, Z. J. Shang, H. Sun and J. Zou, {\it On singular perturbation of the reduced wave equation and scattering from an embedded obstacle},  {J. Dynamics and Differential  Equations}, {\bf 24} (2012), 803--821.

 \bibitem{SPV}
 M. Salo, L. P\"aiv\"arinta and E. Vesalainen, {\it Strictly convex corners scatter}, Rev. Mat. Iberoamericana, \textbf{33} (2017), no. 4, 1369--1396.

\bibitem{WQ99}
{R. Wong and C.K. Qu}. {\it Best possible upper and lower bounds
for the zeros of the Bessel function $J_{\nu}$(x)}, Trans. Amer. Math. Soc., \textbf{351} (1999), 2833--2859.


\end{thebibliography}
\end{document}